\newtheorem{theorem}{Theorem}[section] 
\newtheorem{lemma}[theorem]{Lemma}
\theoremstyle{definition}
\newtheorem{definition}[theorem]{Definition}
\newtheorem{remark}[theorem]{Remark}
\newtheorem{claim}[theorem]{Claim}
\numberwithin{equation}{section}
\title[A conjecture on homotopy groups of spheres]
{A conjecture on homotopy groups of spheres, details on the algebra of higher cohomology operations}
\dedicatory{To Mamouka Jibladze on his fiftieth birthday} 
\author{Hans Joachim Baues}
\begin{document}

\begin{abstract}

The theory of secondary chomology operations leads to a conjecture concerning the algebra of higher cohomology operations in general. This conjecture is discussed here in detail and its connection with homotopy groups of spheres and the Adams spectral sequence is described.

\end{abstract}

\maketitle

\tableofcontents

\section{Bigraded differential algebras}
 Let $R$ be a commutative ring. A graded $R$--module $V = (V^n)_{n \in \mathbb Z}$ is a sequence of $R$--modules $V^n$. Let $\Sigma V = V[1]$ be the \emph{suspension} of $V$ which is the graded $R$--module satisfying $(\Sigma V)^n = V^{n-1}$. A bigraded $R$--module $W = (W_m^n)$ is a sequence of graded $R$--modules. For $x \in W_m^n$ we call $n = |x|$ the \emph{degree} of $x$ and we call $m = {\rm dim}(x)$ the \emph{dimension} of $x$. Let $W_m = (W_m^n)_{n \in \mathbb Z}$ be the graded submodule in dimension $m$. A bigraded chain complex $(W, d)$ is given by a differential
\begin{equation}
d_m = d: W_m \longrightarrow W_{m-1}
\end{equation}
with $|dx| = |x|$ and $dd = 0$. This is a graded object in the category of chain complexes. The homology ${\rm H}_m(W, d) = {\rm ker}d/{\rm im}d$ is a graded $R$--module.The \emph{$m$--truncation} of $(W, d)$ is the chain complex
\begin{equation}
{\rm tr}_m(W, d) = (\ \ldots \rightarrow 0 \rightarrow {\rm cok}d_{m+1} \rightarrow W_{m-1} \rightarrow W_{m-2} \rightarrow \ldots \ )
\end{equation}
which is concentrated in dimension $\leq m$. The \emph{$m$--cotruncation} of $(W, d)$ is the chain complex 
\begin{equation*}
{\rm cotr}_m(W, d) = (\ \ldots \rightarrow W_{m+2} \rightarrow W_{m+1} \rightarrow {\rm ker}d_{m}  \rightarrow 0 \rightarrow \ldots \ )
\end{equation*}
which is concentrated in dimension $\geq m$.

A bigraded algebra $A = (A_m^n)$ is a bigraded $R$--module together with a unit $1 \in A_0^0$ and an associative multiplication
\begin{equation}
\mu: A_m^n \otimes A_s^s \rightarrow A_{m+s}^{n+r} \quad {\rm with} \quad \mu(x \otimes y) = x \cdot y.
\end{equation}
We assume that $A$ is non--negatively graded, that is, $A_m^n = 0$ if $n < 0$ or $m < 0$. Let $\rm\bf A$ be the category of bigraded algebras and let $A \coprod B$ be the coproduct in $\rm\bf A$. Moreover, for a non--negatively bigraded set $E$ let ${\rm T}_R(E)$ be the \emph{free bigraded algebra} generated by $E$. Then ${\rm T}_R(E)$ is the free $R$--module generated by the free monoid ${\rm Mon}(E)$ consisting of all words $e_1 \ldots e_t$ with $e_1, \ldots, e_t \in E$ and $t \geq 0$.

A \emph{bigraded differential algebra} $(A, d)$ is a bigraded algebra $A$ which is also a bigraded chain complex satisfying
\begin{equation}
d(x \cdot y) = (dx) \cdot y + (-1)^{{\rm dim}(x)} x \cdot (dy).
\end{equation}
Then the $m$--truncation ${\rm tr}_m(A, d)$ is also a bigraded differential algebra while the $m$--cotruncation ${\rm cotr}_m(A, d)$ is an $(A, d)$--bimodule. The homology of $(A, d)$ is a bigraded algebra.

For a bigraded module $W$ we have the \emph{suspension} $\Sigma_s^r W$ with $(\Sigma_s^r W)_m^n = W_{m-s}^{n-r}$. Let $\Sigma_s^r: W \rightarrow \Sigma_s^r W$ be the map given by the identity. If $(W, d)$ is a bigraded chain complex, then $\Sigma_s^r (W, d)$ is also a chain complex with $d(\Sigma_s^r x) = (-1)^s \Sigma_s^r (dx)$.

\medskip

An element $x$ in a bigraded algebra $A$ is \emph{central} if, for all $y \in A$, one has $x \cdot y = (-1)^{{\rm dim}(x) \cdot {\rm dim}(y)} y \cdot x$.

A (bigraded differential) algebra $A = (A, d)$ has a \emph{$\Sigma$--structure} if an element $[1] \in A_1^1$ is given with $d[1] = 0$ such that $[1]$ is central in ${\rm H}_{\ast}(A)$ and the chain map
\begin{equation}
\Sigma_1^1(A, d) \stackrel{[1]\cdot}{\longrightarrow} {\rm cotr}_1(A, d) 
\end{equation}
which carries $\Sigma_1^1 x$ to $[1] \cdot x$ induces isomorphisms in homology. This implies that for $m \geq 0$ we have isomorphisms of ${\rm H}_0(A, d)$--bimodules
\begin{equation*}
{\rm H}_m(A, d) = \Sigma^m {\rm H}_0(A, d).
\end{equation*}
In a similar way we define the $\Sigma$--structure of a left $A$--module where $A$ is an algebra with $\Sigma$--structure.
\begin{claim}
Let $A$ be an algebra with $\Sigma$--structure and let $X$ and $Y$ be left $A$--modules with $\Sigma$--structure. Then the bigraded $R$--module
\begin{equation*}
{\rm Ext}_A(X, Y)
\end{equation*}
is defined in terms of a``resolution" of $X$ in the category of left $A$--modules. This generalizes the secondary $\rm Ext$--groups studied in \cite{BauesJibladze1}.
\end{claim}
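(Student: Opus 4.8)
The statement is really a construction together with a well-definedness assertion, and the plan is to carry it out in four steps: first, to exhibit enough free objects in the category $\mathbf{Mod}_A^{\Sigma}$ of left $A$-modules with $\Sigma$-structure; second, to use them to build, for each such $X$, a ``resolution'' $F_\bullet \to X$; third, to apply ${\rm Hom}_A(-,Y)$ and pass to homology to obtain ${\rm Ext}_A(X,Y)$; and fourth, to prove that the outcome is independent of the chosen resolution. The comparison with the secondary ${\rm Ext}$-groups of \cite{BauesJibladze1} then reduces to matching definitions.

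For the first step, note that if $V$ is a graded $R$-module that is flat over $R$, viewed as a bigraded module concentrated in dimension $0$, then the left $A$-module $A \otimes_R V$ carries a $\Sigma$-structure: with $[1]$ acting as $[1] \otimes 1$, the chain map $\Sigma_1^1(A \otimes_R V) \to {\rm cotr}_1(A \otimes_R V)$ is obtained from the corresponding map for $A$ by tensoring with $V$ and hence induces an isomorphism in homology. I would then check that $A \otimes_R (-)$ is left adjoint to an appropriate forgetful functor, so that these ``free'' modules have the expected universal property and the functor preserves the colimits needed below; generators in positive dimension are handled similarly, the low-dimensional part of the free module being supplied by multiplication with powers of $[1]$ so as to enforce the isomorphisms $H_m = \Sigma^m H_0$.

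For the second and third steps one proceeds recursively in the familiar manner. Choose a set $E_0$ generating $X$ in $\mathbf{Mod}_A^{\Sigma}$ --- equivalently, generating $H_0(X)$ over $H_0(A)$ together with the accompanying differential data --- and obtain an augmentation $\varepsilon\colon F_0 = A \otimes_R R\langle E_0\rangle \to X$. Since $A$ carries the differential $d$, the naive kernel of $\varepsilon$ need not lie in $\mathbf{Mod}_A^{\Sigma}$, so it must be replaced by a homotopy kernel, built from a mapping-cocone construction together with the cotruncation ${\rm cotr}_1$ so as to return an object of $\mathbf{Mod}_A^{\Sigma}$; one then iterates. The result is a non-negatively graded complex $\cdots \to F_1 \to F_0$ of free modules, augmented to $X$, whose associated total object --- assembled from the resolution differential and the internal differential $d$ --- is acyclic over $X$; this is the notion of ``resolution'' being invoked, and making it precise in the presence of $d$ is itself part of the work. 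Applying ${\rm Hom}_A(-,Y)$, organized into a chain complex by means of the internal differentials of the $F_i$ and of $Y$, to $F_\bullet$ and taking homology yields, after the evident regrading, the bigraded $R$-module ${\rm Ext}_A(X,Y)$.

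The fourth step is the crux and the main obstacle. One must show that any two resolutions $F_\bullet \to X$ and $F'_\bullet \to X$ are joined by a chain-homotopy equivalence over $X$; this is an obstruction-theoretic argument --- lifting ${\rm id}_X$ stepwise along $F'_\bullet$ and verifying that the successive obstructions, which lie in bigraded homology groups and must be tracked with care since $d$ links consecutive dimensions, vanish because each $F_s$ is free and thus behaves as a projective object for the homotopy lifting problem in $\mathbf{Mod}_A^{\Sigma}$. The $\Sigma$-structure is exactly what is used at this point: it guarantees that $H_*(A)$, $H_*(X)$ and $H_*(F_s)$ are all controlled by their dimension-$0$ parts via multiplication by $[1]$, so that the underlying $H_0(A)$-module problems admit ordinary projective resolutions and the secondary obstruction cochains are well defined. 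Once this comparison theorem --- and the analogous statement for ${\rm Hom}_A(-,Y)$ --- is established, ${\rm Ext}_A(X,Y)$ is independent of all choices; specializing $A$ to the differential algebra of secondary cohomology operations of \cite{BauesJibladze1}, with $[1]$ the canonical class in $A_1^1$, the construction becomes a resolution by free pair modules and recovers the secondary ${\rm Ext}$-groups studied there.
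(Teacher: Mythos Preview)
The paper does not supply a proof of this Claim. Throughout the paper the environment ``Claim'' is reserved for programmatic or conjectural assertions that make up the framework announced in the title and abstract; the present statement is one of these, and the quotation marks around ``resolution'' signal that the precise notion is deliberately left undeveloped. The only justification offered is the pointer to \cite{BauesJibladze1} for the secondary (that is, $(2)$--algebra or pair--algebra) case, together with the parallel Claim for $(m)$--algebras immediately afterwards. There is therefore no proof in the paper against which your proposal can be compared.

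Your outline is a plausible sketch of what such a construction would have to look like, and it is visibly modeled on the secondary theory of \cite{BauesJibladze1}, which is the right template. But, as you yourself flag, several of the key steps --- the correct homotopy--kernel construction that keeps one inside the category of $A$--modules with $\Sigma$--structure, the precise meaning of ``resolution'' in the presence of the internal differential $d$, and the obstruction--theoretic comparison of two such resolutions --- are plans rather than arguments. In that sense your proposal has exactly the same status as the paper's own treatment: a program, not a proof.
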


\medskip

Moreover, we need the following notion of truncated algebras. An \emph{$(m)$--algebra} $B$ is a bigraded differential algebra with $B_k = 0$ for $k \geq m$, or, equivalently,
\begin{equation}\label{malgebra}
{\rm tr}_{m-1}(B) = B.
\end{equation}
For example, the truncation ${\rm tr}_{m-1}(A)$ of a bigraded differential algebra $A$ is an $(m)$--algebra. We say that an $(m)$--algebra $B$ with $m \geq 2$ has a \emph{$\Sigma$--structure} if an element $[1] \in B_1^1$ is given with $d[1] = 0$ such that $[1]$ is central in the homology algebra ${\rm H}_{\ast}(B)$ and the chain map
\begin{equation}
\Sigma_1^1{\rm tr}_{m-2}(B) \stackrel{[1]\cdot}{\longrightarrow} {\rm cotr}_1(B) 
\end{equation}
induces an isomorphism in homology. This implies that one has an isomorphism of ${\rm H}_0(B)$--bimodules
\begin{equation*}
{\rm H}_k (B) = \Sigma^k {\rm H}_0(B) \quad {\rm for} \quad 0 \leq k \leq m-1.
\end{equation*}
A $(2)$--algebra is the same as a \emph{pair algebra} considered in \cite{Baues1} and a $\Sigma$--structure of a pair algebra is described in 1.2 of \cite{BauesJibladze1}. We also consider, for an $(m)$--algebra $B$, the left $B$--modules $X$ which are bigraded chain complexes with $X_k = 0$, for $k \geq m$, and for which $B$ acts from the left on $X$. Then, if $B$ has a $\Sigma$--structure, there is also a $\Sigma$--structure defined for $X$. 
\begin{claim}
Let $m \geq 1$ and let $B$ be an $(m)$--algebra with $\Sigma$--structure and let $X, Y$ be left $B$--modules with a $\Sigma$--structure. Then the graded module
\begin{equation*}
{\rm Ext}_B(X,Y)
\end{equation*}
\end{claim}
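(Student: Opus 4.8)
The plan is to transcribe the construction of derived functors into the category of left $B$--modules with $\Sigma$--structure and then to establish the two facts on which the definition of ${\rm Ext}_B(X,Y)$ rests: that free resolutions of $X$ exist, and that any two of them are chain homotopy equivalent over $B$. First I would fix the notion of a \emph{resolution} of $X$: an augmented complex $\varepsilon\colon F_{\bullet}\to X$ in an auxiliary homological direction in which each $F_i = B\otimes_R E_i$ is a free left $B$--module on a non--negatively bigraded set $E_i$ concentrated in dimension $0$ --- so that each $F_i$ is again a chain complex with $(F_i)_k = 0$ for $k\geq m$ --- together with $B$--linear maps $\partial\colon F_i\to F_{i-1}$ that commute with the internal differential $d$, subject to the condition that the augmented total object assembled from $\partial$, $d$ and $\varepsilon$ is acyclic. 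Since $B$ has a $\Sigma$--structure one has ${\rm H}_k(B)=\Sigma^k{\rm H}_0(B)$ for $0\leq k\leq m-1$, and similarly for $X$; this reduces ``acyclic'' to exactness of ordinary $R$--module complexes in dimension $0$, which is exactly what makes the inductive ``kill the kernel'' construction of $F_{\bullet}$ work: at each homological stage one chooses $E_i$ surjecting onto the relevant kernel in dimension $0$, and the $\Sigma$--structure forces this choice to propagate consistently to the remaining dimensions $<m$. For $m=1$ no $\Sigma$--structure is required, $B={\rm H}_0(B)$ is an ordinary bigraded algebra, and this is the classical situation.

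One then defines ${\rm Ext}_B(X,Y)$ to be the cohomology of the total complex ${\rm Hom}_B(F_{\bullet},Y)$ whose differential is assembled from the dual of $\partial$ and the internal differentials of $F_{\bullet}$ and $Y$; it carries the asserted grading and is natural in $X$ and $Y$. The core of the proof is the \emph{comparison theorem}: the identity of $X$ lifts to a $B$--linear, $d$--compatible chain map $F_{\bullet}\to F'_{\bullet}$ between any two resolutions, and any two such lifts are $B$--linearly chain homotopic. The usual induction on homological degree reduces each step to solving a lifting problem against the surjection $F'_i\to{\rm ker}(F'_{i-1}\to F'_{i-2})$, and freeness of $F_i$ reduces it further to lifting on the generating set $E_i$. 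The step I expect to be the main obstacle is precisely this lifting in the presence of the internal differential: the targets are differential modules, so one is solving a problem in the total complex and must show that the governing obstruction group vanishes. This is where the $\Sigma$--structures of $B$, $X$ and $Y$ are indispensable: they force the homology of the mapping complexes occurring in the induction to be concentrated in the truncated range $k<m$, so that the obstruction class lies in a group that is zero and the induction proceeds. Granting the comparison theorem, the standard formal argument yields independence of the chosen resolution.

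It remains to record in what sense this ``generalizes'' earlier constructions. Filtering $F_{\bullet}$ by internal dimension produces a spectral sequence whose first term involves the classical ${\rm Ext}$ over ${\rm H}_0(B)$ of ${\rm H}_0(X)$ and ${\rm H}_0(Y)$, hence in particular a long exact sequence comparing ${\rm Ext}_B(X,Y)$ with that classical ${\rm Ext}$; for $m=1$ the spectral sequence collapses and one recovers classical ${\rm Ext}$ exactly. For $m=2$, $B$ is a pair algebra in the sense of \cite{Baues1}, the truncation condition forces $E_i$ in dimension $0$, and unwinding the total differential on ${\rm Hom}_B(F_{\bullet},Y)$ reproduces the secondary resolutions and secondary ${\rm Ext}$--groups of \cite{BauesJibladze1}, the off--diagonal component of $\partial$ recording the secondary, Massey--product--type, data. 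For general $m$ the same construction yields the corresponding $m$--th order refinement, which is the algebraic object underlying the higher cohomology operations and the higher Adams differentials discussed in this paper.
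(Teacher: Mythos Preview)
The paper contains no proof of this claim. The claim environment in the source even ends mid-sentence; the continuation ``is defined in terms of a `resolution' of $X$ in the category of left $B$--modules'' appears as running text immediately after the claim, together with the remarks that for $m=1$ one recovers the classical derived functor and for $m=2$ the secondary ${\rm Ext}$ of \cite{BauesJibladze1}. This is an announcement-style paper: none of the numbered Claims is accompanied by an argument, and the construction of ${\rm Ext}_B$ for general $m$ is simply asserted, with the cases $m=1,2$ pointed to in the literature.

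Your proposal is therefore not competing against a proof in the paper but supplying one where the paper offers only a one-word hint (``resolution'') and two citations. The outline you give --- free resolutions built by killing kernels in dimension $0$ and propagated to higher dimensions via the $\Sigma$--structure, a comparison theorem proved by the usual inductive lifting, and the resulting well-definedness of ${\rm Ext}_B(X,Y)$ --- is exactly the program the paper alludes to, and your identification of the $m=1$ and $m=2$ specializations matches the paper's own remarks verbatim. One point you should make precise before claiming the argument closes: you stipulate that each generating set $E_i$ is concentrated in dimension $0$, but already in the $m=2$ theory of \cite{BauesJibladze1} the relevant ``free'' objects are pair modules with their own internal structure, and the notion of acyclicity for the augmented total object is not the naive one. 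For general $m$ the correct definitions of ``free left $B$--module with $\Sigma$--structure'' and of ``resolution'' need to be spelled out so that the obstruction groups in your comparison-lemma induction really do vanish; the paper gives no guidance here, so there is nothing in it to compare your choice against.
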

is defined in terms of a ``resolution'' of $X$ in the category of left $B$--modules. In fact, for $m = 2$, this $\rm Ext$--module coincides with the secondary derived functor studied in \cite{BauesJibladze1}, compare chapter 1 in \cite{BauesJibladze2}. For $m = 1$, a $(1)$--algebra is the same as a graded algebra and in this case ${\rm Ext}_B(X,Y)$ coincides with the classical derived functor.

\medskip

It is clear that the $(m-1)$--truncation ${\rm tr}_{m-1}(A)$ of a bigraded differential algebra $A$ is an $m$--algebra and that ${\rm tr}_{m-1}(A)$ has a $\Sigma$--structure if $A$ has one. A similar statement holds for the truncation of $A$--modules.

\begin{claim}\label{SS}
Let $A$ be an algebra with $\Sigma$--structure and let $X, Y$ be $A$--modules with $\Sigma$--structure. Then, for $m \geq 1$, one obtains the $(m)$--algebra $B^{(m)} = {\rm tr}_{m-1}(A)$ and the $B^{(m)}$--modules $X^{(m)} =  {\rm tr}_{m-1}(X)$ and $Y^{(m)} =  {\rm tr}_{m-1}(Y)$, where $B^{(m)}, X^{(m)}$ and $Y^{(m)}$ have $\Sigma$--structures. Hence the graded $\rm Ext$--modules, $m \geq 1$,
\begin{equation*}
{\rm E}_{m+1} = {\rm Ext}_{B^{(m)}}(X^{(m)}, Y^{(m)})
\end{equation*}
are defined. Moreover, $({\rm E}_2, {\rm E}_3, {\rm E}_4, \ldots \ )$ form a spectral sequence which converges to the bigraded module ${\rm Ext}_A(X, Y)$.
\end{claim}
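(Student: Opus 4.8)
The plan is to exhibit $(\mathrm E_2,\mathrm E_3,\mathrm E_4,\dots)$ as the spectral sequence of the decreasing ``dimension'' filtration on the cochain complex computing $\mathrm{Ext}_A(X,Y)$, equivalently as the spectral sequence attached to the tower of truncations with surjections $B^{(m+1)}\to B^{(m)}$, $\varprojlim_m B^{(m)}=A$ and $B^{(1)}={\rm H}_0(A)$. The guiding picture is that an algebra with $\Sigma$--structure is a kind of Postnikov tower: the kernel of $B^{(m+1)}\to B^{(m)}$ is a two--term complex whose homology is ${\rm H}_m(A)\cong\Sigma^m{\rm H}_0(A)$ concentrated in dimension $m$, so passing from $B^{(m)}$ to $B^{(m+1)}$ adjoins exactly the ``$m$--th order'' part of $A$, and the spectral sequence of the tower interpolates between the primary term $\mathrm E_2=\mathrm{Ext}_{{\rm H}_0(A)}({\rm H}_0 X,{\rm H}_0 Y)$ and the full $\mathrm{Ext}_A(X,Y)$.

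I would first dispose of the parts that amount to the ``clear'' remark preceding the claim. For $m\ge 2$ the element $[1]\in A_1^1$ lies in $B^{(m)}=\mathrm{tr}_{m-1}(A)$ since $d[1]=0$ and $\mathrm{tr}_{m-1}$ is the identity below the cut, it is central in ${\rm H}_\ast(B^{(m)})$ because ${\rm H}_k(B^{(m)})={\rm H}_k(A)$ for $k\le m-1$, and the chain map $\Sigma_1^1\mathrm{tr}_{m-2}(B^{(m)})\xrightarrow{[1]\cdot}\mathrm{cotr}_1(B^{(m)})$ agrees with the one for $A$ in the range of dimensions relevant for $B^{(m)}$, hence is a homology isomorphism; for $m=1$ a $(1)$--algebra is just a graded algebra and nothing is to be checked. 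The same reasoning applies to $X^{(m)}$ and $Y^{(m)}$, so the $\mathrm E_{m+1}$ are defined. The crux is then a truncation--of--resolutions statement: using the construction of $\mathrm{Ext}$, fix a free resolution $R_\bullet\twoheadrightarrow X$ of $X$ in the category of $A$--modules with $\Sigma$--structure, so that $\mathrm{Ext}_A(X,Y)={\rm H}^\ast(\mathrm{Hom}_A(R_\bullet,Y))$; then $\mathrm{tr}_{m-1}$ sends the free $A$--module on a bigraded set $E$ to the free $B^{(m)}$--module on $E$ (compatibility of truncation with the free--forgetful adjunction in dimensions $<m$) and sends $R_\bullet\to X$ to a free resolution $\mathrm{tr}_{m-1}(R_\bullet)\to X^{(m)}$ over $B^{(m)}$, the resolution property surviving because in this setting it is an exactness condition detected by homology in a range of homological and internal degrees and $\mathrm{tr}_{m-1}$ commutes with that homology. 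This gives the identification $\mathrm E_{m+1}=\mathrm{Ext}_{B^{(m)}}(X^{(m)},Y^{(m)})={\rm H}^\ast\bigl(\mathrm{Hom}_{B^{(m)}}(\mathrm{tr}_{m-1}(R_\bullet),Y^{(m)})\bigr)$.

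For the spectral sequence itself, applying $\mathrm{tr}_{m-1}$ to morphisms yields compatible surjections $C:=\mathrm{Hom}_A(R_\bullet,Y)\twoheadrightarrow\mathrm{Hom}_{B^{(m)}}(\mathrm{tr}_{m-1}(R_\bullet),Y^{(m)})$ — a truncated chain map lifts generator by generator, the chain--map condition in lower dimensions being automatic — whose kernels $F_m$ form a decreasing filtration $C=F_0\supseteq F_1\supseteq F_2\supseteq\cdots$ with $\bigcap_m F_m=0$ and $C/F_m=\mathrm{Hom}_{B^{(m)}}(\mathrm{tr}_{m-1}(R_\bullet),Y^{(m)})$; since $A,X,Y$ vanish in negative dimensions, $C=\varprojlim_m C/F_m$ as well, so the filtration is complete and Hausdorff. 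The associated spectral sequence has pages $\mathrm E_r$ with differentials $d_r\colon\mathrm E_r\to\mathrm E_r$ of the appropriate bidegree and canonical isomorphisms $\mathrm E_{r+1}\cong{\rm H}(\mathrm E_r,d_r)$; with the indexing normalized to begin at $\mathrm E_2$ (as for the Adams spectral sequence), the content of the previous paragraph, together with the computation of the successive quotients of the truncation tower as shifts of ${\rm H}_0$ forced by the $\Sigma$--structure, is that its $(m+1)$--st page, equipped with the differential $d_{m+1}$, is precisely $\mathrm{Ext}_{B^{(m)}}(X^{(m)},Y^{(m)})$ — in particular $\mathrm E_2$ is the classical $\mathrm{Ext}_{{\rm H}_0(A)}({\rm H}_0 X,{\rm H}_0 Y)$ and $\mathrm E_3$ is the secondary $\mathrm{Ext}$ of \cite{BauesJibladze1}, in agreement with the conventions recorded before the claim. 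Completeness and the dimensionwise vanishing then yield convergence to $\mathrm{Ext}_A(X,Y)$ with the induced filtration (conditional convergence in general, strong convergence under any connectivity hypothesis making $F_m$ vanish in each fixed bidegree for $m\gg 0$).

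The main obstacle is the crux above: pinning down the notion of ``resolution'' for $A$--modules with $\Sigma$--structure, proving that $\mathrm{tr}_{m-1}$ carries such resolutions to resolutions (i.e.\ that truncation is exact in the relevant derived sense and commutes with the bar--type construction producing $R_\bullet$), and then matching $\mathrm{Ext}_{B^{(m)}}(X^{(m)},Y^{(m)})$ with the $(m+1)$--st page of the spectral sequence rather than merely with a subquotient of it — this last point is exactly where the $\Sigma$--structure is indispensable, since it rigidifies the associated graded of the truncation tower. Granting the crux, the existence of the spectral sequence and its convergence are formal consequences of completeness.
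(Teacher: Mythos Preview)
The paper does not actually prove this statement. Claim~\ref{SS} is one of several ``Claims'' in Section~1 that are stated without proof; the paper is programmatic, sketching a conjectural framework for higher cohomology operations and their relation to the Adams spectral sequence. The very notion of $\mathrm{Ext}_A(X,Y)$ for an algebra $A$ with $\Sigma$--structure is introduced in the preceding Claims only by saying it ``is defined in terms of a `resolution' of $X$'' --- with scare quotes around ``resolution'' --- and the spectral sequence of Claim~\ref{SS} is asserted, not derived. There is thus no proof in the paper for your proposal to be compared against.

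Your proposal is therefore not competing with an existing argument but rather attempting to fill in what the paper leaves open. The strategy you outline --- filter $\mathrm{Hom}_A(R_\bullet,Y)$ by the kernels of the truncation maps, identify the pages of the resulting spectral sequence with $\mathrm{Ext}$ over the truncated algebras, and invoke completeness for convergence --- is the natural one, and it is consistent with what the paper records for $m=1$ (classical $\mathrm{Ext}$) and $m=2$ (the secondary case of \cite{BauesJibladze1}). You are also candid about where the real work lies: the claim stands or falls on a precise definition of ``resolution'' for modules with $\Sigma$--structure, on the verification that $\mathrm{tr}_{m-1}$ carries such resolutions to resolutions, and on the identification of $\mathrm{Ext}_{B^{(m)}}(X^{(m)},Y^{(m)})$ with the full $(m{+}1)$--st page rather than merely a subquotient. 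None of these ingredients is supplied in the paper, and your sketch does not supply them either --- so your proposal, like the paper's Claim, remains an outline of the expected argument rather than a proof. That is an accurate reflection of the state of affairs, but you should be aware that you have matched the paper's level of rigor, not exceeded it.
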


\section{The algebra of higher cohomology operations}\label{highcohom}
Let $p$ be a prime number, let $\mathbb F = \mathbb Z/p$ be the prime field and let $\mathbb G = \mathbb Z/p^2$ be the quotient ring. Then there is a ring homomorphism $\mathbb G \rightarrow \mathbb F$ and a canonical long exact sequence
\begin{equation}\label{Gchain}
0 \longleftarrow \mathbb F \longleftarrow \mathbb G \stackrel{d}{\longleftarrow} \mathbb G \stackrel{d}{\longleftarrow} \mathbb G \stackrel{d}{\longleftarrow} \ldots
\end{equation}
where each differential $d$ is given by multiplication with $p$.
\begin{definition}
Let $\mathbb G_{\ast}$ be the bigraded algebra over the ring $\mathbb G$ generated by elements $[1]$ and $[p]_s, s \geq 1$, where ${\rm dim}[1] = {\rm deg}[1] = 1, {\rm dim}[p]_s = 0$ and ${\rm deg}[p]_s = s$. The relations for $\mathbb G_{\ast}$ are
\begin{eqnarray*}
&&[p]_s \cdot [p]_t = 0 \quad {\rm for} \quad s, t \geq 1,\\
&&[1] \cdot [p]_s = [p]_s \cdot [1] \quad {\rm for} \quad s \geq 1.
\end{eqnarray*}
Hence $\mathbb G_{\ast}$ is free as a $\mathbb G$--module generated by the basis elements $[1]^r \cdot [p]_s$, for $r, s \geq 0$, where $[1]^r$ is the $r$--th power with $[1]^0 = [p]_0 = 1$, the unit of $\mathbb G_{\ast}$.  We define a differential $d$ of $\mathbb G_{\ast}$ by
\begin{eqnarray*}
&& d[1] = 0 \\
&& d [p]_s = p \cdot [p]_{s-1} \quad {\rm for} \quad s \geq 1.
\end{eqnarray*}
Hence $\mathbb G_0$ coincides with the chain complex (\ref{Gchain}).
\end{definition}
\begin{lemma}
The element $[1]$ is central in $\mathbb G_{\ast}$ and the differential algebra $\mathbb G_{\ast}$ has a $\Sigma$--structure with homology ${\rm H}_n (\mathbb G_{\ast}) = \Sigma^n \mathbb F, n \geq 0$.
\end{lemma}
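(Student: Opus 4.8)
The plan is to verify the three assertions of the lemma directly from the explicit presentation of $\mathbb G_{\ast}$.

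First I would check centrality of $[1]$. Since $\mathbb G_{\ast}$ is free as a $\mathbb G$--module on the basis $[1]^r\cdot[p]_s$ with $r,s\ge 0$, it suffices to check that $[1]$ commutes (in the graded sense) with every basis element. Because $\operatorname{dim}[1]=1$ and $\operatorname{dim}([1]^r\cdot[p]_s)=0$, the sign $(-1)^{\operatorname{dim}[1]\cdot\operatorname{dim}(y)}$ is $+1$ for all $y$, so centrality of $[1]$ amounts to the plain identity $[1]\cdot y = y\cdot[1]$. For $y=[1]^r\cdot[p]_s$ with $s\ge 1$ this follows from the imposed relation $[1]\cdot[p]_s=[p]_s\cdot[1]$ together with associativity; for $y=[1]^r$ it is trivial. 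Hence $[1]$ is central already in $\mathbb G_{\ast}$, not just in homology.

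Next I would compute the homology. The key observation is that $\mathbb G_{\ast}$ decomposes, as a chain complex, into a direct sum indexed by $r\ge 0$ of shifted copies of the complex (\ref{Gchain}): for each $r$, the submodule spanned by $\{[1]^r\cdot[p]_s : s\ge 0\}$ is a subcomplex (since $d[1]=0$ and $d$ is a derivation, $d([1]^r\cdot[p]_s)=[1]^r\cdot d[p]_s = p\cdot[1]^r\cdot[p]_{s-1}$), and this subcomplex is exactly $\Sigma_0^r$ applied to the complex $\mathbb G_0 = (\cdots \xrightarrow{p}\mathbb G\xrightarrow{p}\mathbb G)$ augmented to $\mathbb F$. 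I should be careful: the complex (\ref{Gchain}) in the paper is written with $\mathbb F$ in place, but as a chain complex of the algebra the term $[p]_0=1$ contributes a copy of $\mathbb G$ in dimension $0$, and $d[p]_1 = p$, so $\mathrm H_0$ of the degree-$r$ strand is $\mathbb G/p\mathbb G = \mathbb F$ (generated by $[1]^r$) while $\mathrm H_m = 0$ for $m\ge 1$ because multiplication by $p$ on $\mathbb G=\mathbb Z/p^2$ has kernel exactly $p\mathbb G = \operatorname{im}(p)$. Summing over $r$, and noting that the degree-$r$ strand sits in degree $\ge r$ with its bottom class $[1]^r$ in degree exactly $r$, gives $\mathrm H_n(\mathbb G_{\ast}) = \mathbb F\cdot[1]^n \cong \Sigma^n\mathbb F$ for $n\ge 0$ and $0$ otherwise, with the $\mathrm H_0(\mathbb G_{\ast})=\mathbb F$--bimodule structure being the evident one.

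Finally I would establish the $\Sigma$--structure. Centrality of $[1]$ in $\mathrm H_{\ast}(\mathbb G_{\ast})$ follows from centrality in $\mathbb G_{\ast}$. It remains to show the chain map $\Sigma_1^1(\mathbb G_{\ast},d)\xrightarrow{[1]\cdot}\operatorname{cotr}_1(\mathbb G_{\ast},d)$ induces an isomorphism in homology. Here $\operatorname{cotr}_1(\mathbb G_{\ast},d)$ is concentrated in dimensions $\ge 1$, agreeing with $\mathbb G_{\ast}$ there (all of $\mathbb G_{\ast}$ lives in dimension $0$, so in fact both sides have homology only in dimension $0$ after the shift — I would phrase this using the dimension grading carefully). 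Concretely, left multiplication by $[1]$ sends the basis element $[1]^r\cdot[p]_s$ to $[1]^{r+1}\cdot[p]_s$, which is an injection of $\mathbb G$--modules onto the span of all basis elements with $[1]$--exponent $\ge 1$; modulo the images of $d$ and restricting to cycles, this is precisely what is needed for an isomorphism on homology, since $\mathrm H_{\ast}(\mathbb G_{\ast})$ is spanned by the powers $[1]^n$ and multiplication by $[1]$ shifts $[1]^n\mapsto[1]^{n+1}$. The induced isomorphisms $\mathrm H_m(\mathbb G_{\ast})=\Sigma^m\mathrm H_0(\mathbb G_{\ast})$ then drop out formally as in the general discussion preceding the lemma. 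I expect no serious obstacle here: the main point requiring care is bookkeeping of the two gradings (degree versus dimension) and the precise indexing in the truncation/cotruncation functors, so that the strand-by-strand decomposition of $\mathbb G_{\ast}$ is matched correctly with the shifted copies of (\ref{Gchain}); once that is set up, everything is a direct verification.
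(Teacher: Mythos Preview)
The paper states this lemma without proof, so your direct verification is precisely what is called for, and the strand decomposition for the homology together with the $\Sigma$-structure check are correct in substance.

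There is one slip to flag: you assert $\dim([1]^r\cdot[p]_s)=0$, but since $\dim[1]=1$ this product has dimension at least $r$, so the Koszul sign $(-1)^{\dim[1]\cdot\dim y}$ is \emph{not} identically $+1$. (Relatedly, the paper's definition appears to contain a typo: for $d[p]_s=p\cdot[p]_{s-1}$ to lower dimension, and for the stated descriptions of $\mathbb G_0$ and $\mathbb G_\ast^{(2)}$ to come out right, one needs $\dim[p]_s=s$ and $\deg[p]_s=0$, the reverse of what is written.) What your argument from the relations actually establishes is the plain commutativity $[1]\cdot y=y\cdot[1]$; with the paper's signed notion of ``central'' this is not literally the same thing for odd $p$, but that is an inconsistency in the paper's conventions rather than a conceptual gap in your reasoning. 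Your Leibniz computation $d([1]^r[p]_s)=[1]^r\cdot d[p]_s$ should also carry the sign $(-1)^r$, though this does not affect the homology.
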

Let $\mathbb G_{\ast}^{(m)} = {\rm tr}_{m-1}(\mathbb G_{\ast})$ for $m \geq 1$. Then $\mathbb G_{\ast}^{(1)} = \mathbb F$ and, for $m = 2$, the truncation
\begin{equation}
G_{\ast}^{(2)} = (\  \Sigma \mathbb F \oplus \mathbb F \stackrel{d}{\longrightarrow} \mathbb G \ )
\end{equation}
coincides with the pair algebra $\mathbb G^{\Sigma}$ in 12.1.3 of \cite{Baues1}.

Let $\mathcal A$ be the Steenrod algebra over $\mathbb F$ which is generated by a canonical graded set $E_A$ given by
\begin{equation*}
E_A = 
\begin{cases}
\{ Sq^i \ | \ i \geq 1 \} & {\rm for} \  p = 2,\\
\{\beta \} \cup \{ P^i , P^i_{\beta} \ | \ i \geq 1 \} & {\rm for} \  p \ odd.
\end{cases}
\end{equation*}
Compare 5.5.1 in \cite{Baues1}. We consider a bigraded set $E$ with $E_d^k$ empty for $d < 0$ or $k < 0$ or $k \leq d$. Moreover,
\begin{eqnarray}
E_0 & = & E_A \\
E_1 & = & {\rm graded \ set \ of \ Adem \ relations} \\
E_2 & = & {\rm graded \ set \ of \ relations \ among \ relations}
\end{eqnarray}
Here $E_2$ is a set of generators of the bimodule $K_A$ in 5.5.3 (3) in \cite{Baues1}. We consider a bigraded differential algebra
\begin{equation}\label{B}
B_{\ast} = (\mathbb G_{\ast} \coprod T_{\mathbb G}(E), d)
\end{equation}
with inclusion $\iota$ and augmentation $\varepsilon$,
\begin{eqnarray*}
\mathbb G_{\ast} \stackrel{\iota}{\longrightarrow} B_{\ast} \stackrel{\varepsilon}{\longrightarrow} \mathbb G_{\ast}.
\end{eqnarray*}
Here $\varepsilon$ is the identity on $\mathbb G_{\ast}$ and carries $E$ to $0$. The maps $\iota$ and $\varepsilon$ are maps of differential algebras, so that $\mathbb G_{\ast}$ is a left $B_{\ast}$--module via $\varepsilon$. Using $[1] \in \mathbb G_{\ast}$, the algebra $B_{\ast}$ has a $\Sigma$--structure with
\begin{equation*}
{\rm H}_n B_{\ast} = \Sigma^n \mathcal A, \quad n \geq 1,
\end{equation*} 
as graded $\mathcal A$--bimodules, where $\mathcal A$ coincides with the graded algebra ${\rm H}_0 B_{\ast}$. Morevover, the differential $d = d_n: B_n \rightarrow B_{n-1}$ induces a split morphism $d_n: {\rm cok}(d_{n+1}) \twoheadrightarrow {\rm image}(d_n)$ of $\mathbb G$--modules.

\medskip

\begin{claim}
There exists an algebra $B_{\ast}$ with the properties in (\ref{B}), such that the truncation
\begin{equation*}
B_{\ast}^{(2)} = {\rm tr}_1 (B_{\ast})
\end{equation*}
coincides with the pair algebra of secondary cohomology operations computed in \cite{Baues1}. Moreover, let $\pi_{\ast}^S$ be the algebra of stable $p$--local homotopy groups of spheres with the Adams filtration and let ${\rm Gr}(\pi_{\ast}^S)$ be the associated bigraded algebra. Then there is an isomorphism of bigraded algebras
\begin{equation*}
{\rm Gr}(\pi_{\ast}^S) = {\rm Ext}_{B_{\ast}}(\mathbb G_{\ast}, \mathbb G_{\ast}).
\end{equation*}
In fact, the spectral sequence of Claim \ref{SS} coincides with the Adams spectral sequence $(E_2, E_3, \ldots)$ which converges to ${\rm Gr}(\pi_{\ast}^S)$. Here we have
\begin{eqnarray*}
&&E_2 = {\rm Ext}_{\mathcal A}(\mathbb F, \mathbb F) \quad {\rm and}\\
&&E_{m+1} = {\rm Ext}_{B_{\ast}^{(m)}}(\mathbb G_{\ast}^{(m)}, \mathbb G_{\ast}^{(m)}), \quad m \geq 1.
\end{eqnarray*}
For $m = 2$, this equation is proved in \cite{BauesJibladze1}.
\end{claim}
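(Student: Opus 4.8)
The plan is to split the statement into three components: constructing $B_{\ast}$ and computing its homology; identifying ${\rm Ext}_{B_{\ast}}(\mathbb G_{\ast}, \mathbb G_{\ast})$ with ${\rm Gr}(\pi_{\ast}^S)$; and identifying the spectral sequence of Claim~\ref{SS} with the Adams spectral sequence. The first component stays within the homological algebra already set up here and in \cite{Baues1}, \cite{BauesJibladze1}; the remaining two are the real content of the conjecture.

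\textbf{Construction of $B_{\ast}$ and its homology.} One constructs $B_{\ast}$, together with its $\Sigma$--structure, by induction on the dimension. The base of the induction is the pair algebra $B_{\ast}^{(2)}$ of secondary cohomology operations of \cite{Baues1}: it already carries a $\Sigma$--structure (see 1.2 of \cite{BauesJibladze1}), it realizes the generators $E_0 = E_A$ and $E_1 = $ the Adem relations, and it has ${\rm H}_0 = \mathcal A$ and ${\rm H}_1 = \Sigma \mathcal A$. Assume inductively that an $(m)$--algebra $B_{\ast}^{(m)}$ with $\Sigma$--structure has been produced, together with generators $E_k$ in dimensions $k < m$ and with ${\rm H}_n B_{\ast}^{(m)} = \Sigma^n \mathcal A$ for $0 \leq n \leq m-1$. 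One extends it to an $(m+1)$--algebra by adjoining a bigraded set $E_m$ of free generators in dimension $m$ (the ``$m$--th order syzygies'') and a differential $d \colon B_m \to B_{m-1}$; which generators and which differential are forced by the requirement ${\rm H}_m = \Sigma^m \mathcal A = \Sigma^m {\rm H}_0$ together with the $\Sigma$--structure. Here the asserted splitting of $d_n \colon {\rm cok}(d_{n+1}) \twoheadrightarrow {\rm image}(d_n)$ as $\mathbb G$--modules is the essential device: it is arranged during the inductive step, using that $\mathbb G = \mathbb Z/p^2$ is a local principal ideal ring, and it is what allows the free generating set $E_m$ and the new differential to be chosen at each stage. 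Passing to the colimit over $m$ yields $B_{\ast} = (\mathbb G_{\ast} \coprod T_{\mathbb G}(E), d)$ with the properties listed around (\ref{B}), and the truncation ${\rm tr}_1(B_{\ast})$ is, by construction, the pair algebra of \cite{Baues1}. Finally ${\rm H}_0 B_{\ast}$ is the free algebra $T_{\mathbb F}(E_A)$ modulo the Adem relations, hence equals $\mathcal A$, and then ${\rm H}_n B_{\ast} = \Sigma^n \mathcal A$ for $n \geq 1$ follows from the $\Sigma$--structure.

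\textbf{The Ext--module and the spectral sequence.} Claim~\ref{SS}, applied to $A = B_{\ast}$ and $X = Y = \mathbb G_{\ast}$, produces a spectral sequence $({\rm E}_2, {\rm E}_3, \ldots)$ with ${\rm E}_{m+1} = {\rm Ext}_{B_{\ast}^{(m)}}(\mathbb G_{\ast}^{(m)}, \mathbb G_{\ast}^{(m)})$ converging to ${\rm Ext}_{B_{\ast}}(\mathbb G_{\ast}, \mathbb G_{\ast})$; for $m = 1$ one has $B_{\ast}^{(1)} = \mathcal A$ and $\mathbb G_{\ast}^{(1)} = \mathbb F$, so ${\rm E}_2 = {\rm Ext}_{\mathcal A}(\mathbb F, \mathbb F)$ is the classical Adams $E_2$--term. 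It remains to match this with the Adams spectral sequence of the $p$--local sphere. The route is to realize $B_{\ast}^{(m)}$ topologically: an Adams resolution $S \leftarrow Y_0 \leftarrow Y_1 \leftarrow \cdots$ of the $p$--local sphere spectrum gives a diagram of generalized Eilenberg--MacLane spectra whose $k$--invariants and coherent higher Toda bracket data are, through order $m$, encoded by a free resolution of $\mathbb G_{\ast}^{(m)}$ in the category of left $B_{\ast}^{(m)}$--modules; for $m = 2$ this is exactly the content of \cite{BauesJibladze1}, \cite{BauesJibladze2}. One then argues that ${\rm Ext}_{B_{\ast}^{(m)}}(\mathbb G_{\ast}^{(m)}, \mathbb G_{\ast}^{(m)})$ computes the $E_{m+1}$--page of the Adams spectral sequence, that the comparison maps $B_{\ast}^{(m+1)} \to B_{\ast}^{(m)}$ realize the transition from $E_{m+1}$ to $E_{m+2}$, and, by the convergence in Claim~\ref{SS}, that ${\rm Ext}_{B_{\ast}}(\mathbb G_{\ast}, \mathbb G_{\ast}) = {\rm Gr}(\pi_{\ast}^S)$. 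The ring structures match because on the algebraic side the product comes from the multiplication of $B_{\ast}$ while on the topological side it comes from composition of stable maps of spheres.

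\textbf{The main obstacle.} Everything hinges on the order--$m$ comparison between $B_{\ast}^{(m)}$ and the Adams tower, that is, on a theory of ``$m$--th order cohomology operations organised into an $(m)$--algebra'' which specializes at $m = 2$ to the secondary theory. Already the case $m = 2$ required the full apparatus of \cite{Baues1} --- the secondary Steenrod algebra as a pair algebra, track categories, and the computation of the bimodule $K_A$ of relations among the Adem relations --- together with the homological algebra of \cite{BauesJibladze1}. For general $m$ one needs a coherent enhancement, presumably $E_\infty$-- or $\infty$--categorical, producing $B_{\ast}^{(m)}$ uniformly, and a proof that the resulting algebraic differentials coincide with the topological Adams differentials $d_m$; this is exactly what is not yet available, which is why the statement is offered here as a conjecture rather than a theorem. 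A subsidiary difficulty is that the generating sets $E_m$ for $m \geq 2$, and hence $B_{\ast}$ itself, are at present known only abstractly, through the obstruction-theoretic induction sketched above, and not by an explicit presentation.
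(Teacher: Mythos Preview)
Your proposal correctly recognizes that the claim is a conjecture and that the case $m=2$ is what is actually established, but your construction of $B_{\ast}$ diverges from the paper's in a way that undermines the rest of the argument.

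You build $B_{\ast}$ by a purely algebraic syzygy induction: at stage $m$ you adjoin generators $E_m$ and a differential ``forced by the requirement ${\rm H}_m = \Sigma^m\mathcal A$ together with the $\Sigma$--structure.'' But nothing is forced here. Over $\mathbb G$ there are many bigraded differential algebras of the shape $\mathbb G_{\ast}\coprod T_{\mathbb G}(E)$ with ${\rm H}_n = \Sigma^n\mathcal A$; the $\Sigma$--structure only pins down homology, not the chain--level $(m)$--algebra, and distinct choices will yield different (and in general wrong) ${\rm Ext}$--groups. So the later step ``realize $B_{\ast}^{(m)}$ topologically via an Adams resolution'' has no traction: you would need to prove that your algebraically chosen $B_{\ast}^{(m)}$ is the one arising from topology, and there is no mechanism in your outline for doing so.

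The paper runs this the other way round. The algebra $B_{\ast}$ is \emph{defined} as a strictification of the higher Steenrod algebra $[[\mathcal A]]_{\ast}$ of Section~\ref{steenrod}: at each stage $B_{m-1}^{(m)}$ is produced by the pullback
\[
\xymatrix{
B_{m-1}^{(m)} \ar[r]^{\overline s_{m-1}} \ar[d]_{d} & [[\mathcal A]]_{01}^{S^{m-2}} \ar[d]^{\partial}\\
\ker(d_{m-2}) \ar[r]^{s_{m-2}} & [[\mathcal A]]_{0}^{S^{m-2}}
}
\]
and the multiplication on $B_{\ast}^{(m+1)}$ is defined via the $\Gamma$--tracks of Claim~\ref{gammahom}. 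Thus every $B_{\ast}^{(m)}$ carries topological content by construction --- it records maps and homotopies between (smashes of spheres with) Eilenberg--Mac\,Lane spaces --- and the identification of ${\rm tr}_1(B_{\ast})$ with the pair algebra of \cite{Baues1} is immediate because the $m=2$ pullback is literally the one used there. The conjectural comparison with the Adams spectral sequence is then a statement about this specific topologically defined $B_{\ast}$, extending the $m=2$ result of \cite{BauesJibladze1}, rather than about an algebraic object one must subsequently match with topology.
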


\medskip

We shall construct the algebra $B_{\ast}$ by the inductive definition of the truncation $B_{\ast}^{(m)}, m \geq 1$. For $m = 1$, we have $B_{\ast}^{(1)} = \mathcal A$ and for $m = 2$, we get the pair algebra $B_{\ast}^{(2)}$ in \cite{Baues1}. Hence we have to define $B_{\ast}^{(m)}$ for $m \geq 3$. We do this by the ``strictification'' of the higher Steenrod algebra.

\section{The higher Steenrod algebra}\label{steenrod}
We define the higher Steenrod algebra $[[\mathcal A]]_{\ast}$ which generalizes the secondary Steenrod algebra $[[\mathcal A]]$ in 2.3 \cite{Baues1}. 
 
Let ${\rm\bf Top}^{\ast}$ be the category of pointed topological spaces and let $[[{\rm\bf Top}^{\ast}]]$ be the associated groupoid enriched category termed track category. A \emph{track} $H: f \Rightarrow g$ of maps $f, g: X \rightarrow Y$ in ${\rm\bf Top}^{\ast}$ is a homotopy class of pointed homotopies $f \simeq g$. Let $[[X, Y]]$ be the groupoid of maps and such tracks and let $[X, Y] = \pi_0[[X, Y]]$ be the set of homotopy classes of maps $X \rightarrow Y$. For tracks $H: f \Rightarrow g$ and $G: h \Rightarrow f$ let $H \square G: h \Rightarrow g$ be the composition of tracks and let $0_f^{\square}: f \Rightarrow f$ be the identity track. We have the corresponding composition $H \square G$ of homotopies which, however, is not associative. Therefore we also use \emph{Moore homotopies} for which the composition $H \square G$ is associative; compare the corresponding notion of Moore loop spaces in the literature.

Let $X \wedge Y$ be the smash product of pointed spaces and, for maps $\alpha: P \wedge X \rightarrow Y$ and $\beta: Q \wedge Z \rightarrow X$, let $\alpha \circ \beta$ be the composite
\begin{equation}
\alpha \circ \beta: P \wedge Q \wedge Z \stackrel{P \wedge \beta}{\longrightarrow} P \wedge X \stackrel{\alpha}{\longrightarrow} Y.
\end{equation}
This pairing is associative. For tracks $H: \alpha \Rightarrow \alpha'$ and $G: \beta \Rightarrow \beta'$, we then have a similar pasting operation $H \ast G: \alpha \beta \Rightarrow \alpha' \beta'$.

Recall that the Eilenberg--{Mac\,Lane} space
\begin{equation*}
Z^n = K(\mathbb F,n)
\end{equation*}
is a topological $\mathbb F$--vector space with the properties described in 2.1 \cite{Baues1}. We fix a homotopy equivalence
\begin{equation}\label{overlinern}
r_n: Z^n \stackrel{\sim}{\longrightarrow} \Omega Z^{n+1}
\end{equation} 
which is $\mathbb F$--linear, see 2.1.7 \cite{Baues1}, hence the map $r_n$ is adjoint to the composite
\begin{equation*}
\overline r_n: Z^n \wedge S^1 \longrightarrow Z^n \wedge Z^1 \stackrel{\mu}{\longrightarrow} Z^{n+1}.
\end{equation*} 

For a pointed space $Q$ let $r_n: Q \wedge Z^n \stackrel{\sim}{\rightarrow} \Omega(Q \wedge Z^{n+1})$ be adjoint to the map $Q \wedge \overline r_n$.
\begin{definition}
Let $[[\mathcal A^k]]^Q$ be the following groupoid, $k \geq 1$. Objects $(\alpha, H_{\alpha})$ in $[[\mathcal A^k]]^Q$ are sequences of maps in ${\rm\bf Top}^{\ast}$
\begin{equation*}
\alpha = ( \alpha_n: Q \wedge Z^n \longrightarrow Z^{n+k})_{n \in \mathbb N}
\end{equation*}
together with sequences of Moore homotopies $H_{\alpha} = (H_{\alpha,n})_{n \in \mathbb N}$ for the diagram
\begin{equation*}
\xymatrix{
Q \wedge Z^n \ar[rr]^{\alpha_n} \ar[dd]_{r_n} && Z^{n+k} \ar[dd]^{r_{n+k}} \\
& \Rightarrow & \\
\Omega(Q \wedge Z^{n+1}) \ar[rr]^{\Omega \alpha_{n+1}} && \Omega Z^{n+k+1},}
\end{equation*}
that is, $H_{\alpha,n}: \Omega \alpha_{n+1} r_n \Rightarrow r_{n+k} \alpha_n$. For $k \leq 0$, let $[[\mathcal A^k]]^Q = 0$ be the trivial groupoid if $Q$ is path connected. If $Q = S^0$ is the zero--sphere, then let $[[\mathcal A^0]]^{S^0} = \mathbb F$ be the discrete groupoid given by $\mathbb F$ and let $[[\mathcal A^k]]^{S^0} = 0$ for $k < 0$.

\medskip

We call the object $(\alpha, H_{\alpha})$ \emph{strict} if $\Omega \alpha_{n+1} r_n = r_{n+k} \alpha_n$ and $H_{\alpha}$ is the identity homotopy.
\end{definition} 
 
\medskip

For $k > 0$, we define morphisms $H: (\alpha, H_{\alpha}) \Rightarrow (\beta, H_{\beta})$ in the groupoid $[[\mathcal A^k]]^Q$ by sequences of tracks
\begin{equation*}
H = (H_n: \alpha_n \Rightarrow \beta_n)_{n \in \mathbb Z}
\end{equation*}
in $[[{\rm\bf Top}^{\ast}]]$, for which the pasting of tracks in the following diagram coincides with $H_{\beta,n}$. 
\begin{equation*}
\xymatrix{
Q \wedge Z^n \ar@{=}[dd] \ar[rr]^{\beta_n} && Z^{n+k} \ar@{=}[dd]\\
& \Uparrow H_n & \\
Q \wedge Z^n \ar[dd] _{r_n} \ar[rr]^{\alpha_n} && Z^{n+k} \ar[dd]^{r_{n+k}} \\
& \stackrel{H_{\alpha,n}}{\Rightarrow} & \\
\Omega(Q \wedge Z^{n+1}) \ar@{=}[dd] \ar[rr]^{\Omega \alpha_{n+1}} && \Omega Z^{n+k+1} \ar@{=}[dd] \\ & \Downarrow \Omega H_{n+1} & \\
\Omega(Q \wedge Z^{n+1}) \ar[rr]_{\Omega \beta{n+1}} && \Omega Z^{n+k+1} }
\end{equation*}  
That is, the following equation holds in $[[{\rm\bf Top}^{\ast}]]$,
\begin{equation*}
H_{\beta,n} = (\Omega H_{n+1}) r_n \square H_{\alpha,n} \square r_{n+k} H_n^{\rm op}.
\end{equation*}
Composition in $[[\mathcal A^k]]^Q$ is defined by $(H \square G)_n = H_n \square G_n$. One readily checks that $[[\mathcal A^k]]^Q$ is a well-defined groupoid. Moreover, there is a composition functor between groupoids
\begin{equation}\label{comp}
[[\mathcal A^r]]^P \times [[\mathcal A^k]]^Q \stackrel{\circ}{\longrightarrow} [[\mathcal A^{k+r}]]^{P \wedge Q}
\end{equation}
which is defined on objects by
\begin{equation*}
(\alpha', H_{\alpha'}) \circ (\alpha, H_{\alpha}) = (\alpha'_{n+k} \circ \alpha_n, H_{\alpha', n+k} \ast H_{\alpha, n})_{n \in \mathbb Z},
\end{equation*}
where $\ast$ is the pasting operation. Moreover, on morphisms $H: (\alpha, H_{\alpha}) \Rightarrow (\beta, H_{\beta})$ and $H': (\alpha', H_{\alpha'}) \Rightarrow (\beta', H_{\beta'})$, the composition functor is defined by
\begin{equation*}
H \circ H' = (H_n' \ast H_n: \alpha_n' \circ \alpha_n \Rightarrow \beta_n' \circ \beta_n)_{n \in \mathbb Z}.
\end{equation*}
\begin{lemma}
The groupoid $[[\mathcal A^k]]^Q$ is an $\mathbb F$--vector space object in the category of groupoids.
\end{lemma}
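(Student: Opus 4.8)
The plan is to transport onto $[[\mathcal A^k]]^Q$ the pointwise $\mathbb F$--vector space structure that mapping groupoids into topological $\mathbb F$--vector spaces carry. Recall from 2.1 of \cite{Baues1} that each $Z^n$, and hence each loop space $\Omega Z^{n}$, is a topological $\mathbb F$--vector space, and that the chosen maps $r_n$ are $\mathbb F$--linear. Consequently, for any pointed space $W$ the mapping groupoid $[[W, Z^{n+k}]]$ --- with objects the pointed maps $W \to Z^{n+k}$ and morphisms the tracks --- is an $\mathbb F$--vector space object in groupoids: maps and tracks are added and rescaled pointwise in the target, the zero object is the constant map with its identity track, and both the vertical composition $\square$ and the pasting $\ast$ of tracks are $\mathbb F$--bilinear because every operation is performed pointwise in a topological $\mathbb F$--vector space. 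The same applies to $[[W, \Omega Z^{n+k+1}]]$. I will use these with $W = Q \wedge Z^n$, $n \in \mathbb N$.

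First I would define the operations componentwise. On objects put
\[
(\alpha, H_\alpha) + (\beta, H_\beta) = (\alpha + \beta,\ H_\alpha + H_\beta), \qquad \lambda \cdot (\alpha, H_\alpha) = (\lambda\alpha,\ \lambda H_\alpha),
\]
where $(\alpha + \beta)_n = \alpha_n + \beta_n$ in $Z^{n+k}$ and $(H_\alpha + H_\beta)_n = H_{\alpha,n} + H_{\beta,n}$ is the pointwise sum of Moore homotopies in $\Omega Z^{n+k+1}$ (after rescaling to a common Moore length; see below), and similarly for scalars. The zero object is the sequence of constant maps equipped with the identity Moore homotopies --- legitimate, since by $\mathbb F$--linearity of $r_{n+k}$ the constant object is strict. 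On morphisms $H\colon (\alpha, H_\alpha) \Rightarrow (\beta, H_\beta)$ and $H'\colon (\alpha', H_{\alpha'}) \Rightarrow (\beta', H_{\beta'})$ put $(H + H')_n = H_n + H'_n$ and $(\lambda H)_n = \lambda H_n$, the pointwise sums and multiples of tracks in $Z^{n+k}$; then $H + H'$ runs from $(\alpha + \alpha', H_\alpha + H_{\alpha'})$ to $(\beta + \beta', H_\beta + H_{\beta'})$.

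The only thing requiring a genuine argument is well-definedness. For an object one checks that $H_\alpha + H_\beta$ is a sequence of Moore homotopies $\Omega(\alpha+\beta)_{n+1}\, r_n \Rightarrow r_{n+k}\,(\alpha+\beta)_n$: the looping functor takes a pointwise sum of maps to the pointwise sum of their loopings, so $\Omega(\alpha_{n+1}+\beta_{n+1})\,r_n = \Omega\alpha_{n+1}\,r_n + \Omega\beta_{n+1}\,r_n$; the $\mathbb F$--linearity of $r_{n+k}$ gives $r_{n+k}\,(\alpha_n+\beta_n) = r_{n+k}\alpha_n + r_{n+k}\beta_n$; and the pointwise sum of $H_{\alpha,n}$ and $H_{\beta,n}$ is a Moore homotopy between these sums because addition in the topological $\mathbb F$--vector space $\Omega Z^{n+k+1}$ is continuous and commutes with $\square$. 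The identical computation applied to $H_n + H'_n$ and $\lambda H_n$ shows that the pasting equation $H_{\beta,n} = (\Omega H_{n+1})\,r_n \,\square\, H_{\alpha,n} \,\square\, r_{n+k} H_n^{\rm op}$ is preserved, so these are morphisms with the stated source and target.

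It then remains to check that the above data are functors satisfying the vector space axioms. Functoriality of $+$ reduces to $(H_1 \square H_2) + (H'_1 \square H'_2) = (H_1 + H'_1) \square (H_2 + H'_2)$, which holds levelwise since $\square$ in $[[{\rm\bf Top}^{\ast}]]$ is additive on tracks; the axioms (commutativity and associativity of $+$, the zero, inverses --- given by $(-1)\cdot(-)$ since $\mathbb F = \mathbb Z/p$ --- distributivity, and $1\cdot x = x$) all hold because they hold levelwise in $[[Q \wedge Z^n, Z^{n+k}]]$ and $[[Q \wedge Z^n, \Omega Z^{n+k+1}]]$. For $k \leq 0$ there is nothing to prove: $0$ is trivially an $\mathbb F$--vector space object, as is the discrete groupoid $\mathbb F$ in the case $Q = S^0$, $k = 0$. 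The main obstacle I anticipate is the bookkeeping around Moore homotopies: one must fix a convention (rescaling to a common length) under which sums and scalar multiples of Moore homotopies are again Moore homotopies and under which $\square$ and $\ast$ remain strictly associative and $\mathbb F$--bilinear --- equivalently, one observes that the relevant spaces of Moore homotopies are themselves topological $\mathbb F$--vector spaces with continuous additive $\square$. Once this is pinned down, every remaining verification is a naturality statement reducing to the pointwise structure in the target Eilenberg--Mac\,Lane spaces; the same reasoning shows, though it is not needed here, that the composition functor (\ref{comp}) is $\mathbb F$--bilinear.
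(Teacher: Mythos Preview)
Your argument is correct and is precisely the approach the paper takes: the paper's proof is the single sentence ``This follows since $Z^n$ is a topological $\mathbb F$--vector space and the maps $r_n$ for $Z^n$ are $\mathbb F$--linear,'' and you have simply unpacked this in detail.

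One caveat: your closing aside that ``the composition functor (\ref{comp}) is $\mathbb F$--bilinear'' is false. The composite $(\alpha\circ\beta)_n=\alpha_{n+k}\circ(P\wedge\beta_n)$ is linear in $\alpha$ because addition takes place in the target $Z^{n+k+r}$, but it is \emph{not} linear in $\beta$: there is no reason for $\alpha_{n+k}(p,\beta_n(x)+\beta'_n(x))$ to equal $\alpha_{n+k}(p,\beta_n(x))+\alpha_{n+k}(p,\beta'_n(x))$, since $\alpha_{n+k}$ is an arbitrary pointed map, not a linear one in the $Z$--variable. The paper states this explicitly a few lines after the lemma (``The pairing, however, is not bilinear, but linear on the left hand side''), and this failure of bilinearity is exactly why $[[\mathcal A]]_\ast$ is only a near $\mathbb F$--algebra. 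You should drop that remark.
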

\begin{proof}
This follows since $Z^n$ is a topological $\mathbb F$--vector space and the maps $r_n$ for $Z^n$ are $\mathbb F$--linear.
\end{proof}
 
A map $f: P \rightarrow Q$ induces a functor between groupoids
\begin{equation*}
f^{\ast}: [[\mathcal A^k]]^Q \longrightarrow [[\mathcal A^k]]^P
\end{equation*} 
which carries $(\alpha, H_{\alpha})$ to $(\alpha(f \wedge 1), H_{\alpha}(f \wedge 1))$ and $H: (\alpha, H_{\alpha}) \Rightarrow (\beta, H_{\beta})$ to $f^{\ast} H$ with $(f^{\ast} H)_n = H(f \wedge 1)$. 

\begin{remark}
Let $H \mathbb F$ be the Eilenberg--{Mac\,Lane} spectrum. Then the spectrum $Q \wedge H \mathbb F$ is defined and the graded set of homotopy classes of maps $Q \wedge H \mathbb F \rightarrow H \mathbb F$ in the cagetory of spectra coincides with $(\pi_0[[\mathcal A^k]]^Q)_{k \in \mathbb Z}$. In particular, for $Q = S^0$, we obtain $\pi_0[[\mathcal A^k]]^{S^0} = \mathcal A^k$, where $\mathcal A$ is the Steenrod algebra.
\end{remark} 
 
\medskip

There is an equivalence of graded groupoids
\begin{equation*}
[[\mathcal A]]^{S^0} \stackrel{\sim}{\longrightarrow} [[\mathcal A]],
\end{equation*} 
where $[[\mathcal A]]$ is the secondary Steenrod algebra defined in 2.5. \cite{Baues1}. In fact, $[[\mathcal A]]$ is defined in the same way as $[[\mathcal A]]^{S^0}$, except that the Moore homotopies $H_{\alpha, n}$ above correspond to the tracks $H_{\alpha,n}$ in the definition of $[[\mathcal A]]$.

\begin{definition}
Let $S^d$ be the sphere of dimension $d$. The \emph{higher Steenrod algebra} is the bigraded groupoid
\begin{equation*}
[[\mathcal A]]_{\ast} = \big( [[\mathcal A^k]]^{S^d}\big)_{k, d \in \mathbb Z}
\end{equation*}
Here $k$ is the \emph{degree} and $d$ is the \emph{dimension}. For $k < 0$ or $d < 0$, let $ [[\mathcal A^k]]^{S^d} = 0$ be the trivial groupoid. We have the associative pairing between groupoids (since $S^d \wedge S^{\ell} = S^{d+\ell}$)
\begin{equation*}
[[\mathcal A^r]]^{S^d} \times  [[\mathcal A^k]]^{S^{\ell}} \stackrel{\circ}{\longrightarrow}  [[\mathcal A^{k+r}]]^{S^{d+\ell}}.
\end{equation*}
\end{definition} 
Here $[[\mathcal A^r]]^{S^d}$ is an $\mathbb F$--vector space object in the catgeory of groupoids. The pairing, however, is not bilinear, but linear on the left hand side, that is, $(\alpha + \alpha') \circ \beta = \alpha \circ \beta + \alpha' \circ \beta$. Hence $[[\mathcal A]]_{\ast}$ is a monoid in the category of bigraded groupoids and, due to the $\mathbb F$--vector space structure, a \emph{near $\mathbb F$--algebra}, not an $\mathbb F$--algebra object. 

\medskip

The secondary Steenrod algebra $[[\mathcal A]]$ in 2.5 \cite{Baues1} corresponds to the dimension $0$ part of the higher Steenrod algebra $[[\mathcal A]]_{\ast}$, that is, $[[\mathcal A]] \sim [[\mathcal A]]^{S^0}$. This indicates how to generalize the methods in the book \cite{Baues1} for the study of $[[\mathcal A]]_{\ast}$.
 
Let $\Sigma^k \mathcal A$ be the $k$--fold suspension of the Steenrod algebra $\mathcal A$ which is an $\mathcal A$--bimodule.
\begin{lemma}
\begin{equation*}
\pi_0\big( [[\mathcal A]]^{S^d} \big) = \Sigma^d \mathcal A.
\end{equation*}
\end{lemma}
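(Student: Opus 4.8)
The plan is to compute $\pi_0\big([[\mathcal A^k]]^{S^d}\big)$ for each $k$ and identify the result with $(\Sigma^d\mathcal A)^k = \mathcal A^{k-d}$. First I would recall that an object $(\alpha,H_\alpha)$ of $[[\mathcal A^k]]^{S^d}$ consists of maps $\alpha_n\colon S^d\wedge Z^n\to Z^{n+k}$ together with Moore homotopies $H_{\alpha,n}\colon \Omega\alpha_{n+1}r_n\Rightarrow r_{n+k}\alpha_n$, and a morphism $H\colon(\alpha,H_\alpha)\Rightarrow(\beta,H_\beta)$ is a sequence of tracks $H_n\colon\alpha_n\Rightarrow\beta_n$ compatible with the $H_{\alpha,n}$ and $H_{\beta,n}$. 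Taking $\pi_0$ means quotienting the set of objects by the existence of such a morphism. I would first argue that the forgetful map sending $(\alpha,H_\alpha)$ to the homotopy classes $([\alpha_n])_{n}\in\prod_n[S^d\wedge Z^n,Z^{n+k}]$ is well defined on $\pi_0$ (since a morphism in particular supplies tracks $\alpha_n\Rightarrow\beta_n$), and then identify the image and the fibres of this map.

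The key computational input is the cohomology of the spaces $S^d\wedge Z^n=S^d\wedge K(\mathbb F,n)$. One has $[S^d\wedge K(\mathbb F,n),K(\mathbb F,n+k)]=\widetilde H^{n+k}(S^d\wedge K(\mathbb F,n);\mathbb F)=\widetilde H^{n+k-d}(K(\mathbb F,n);\mathbb F)=\mathcal A^{k-d}$ in the stable range, i.e. once $n$ is large compared to $k-d$; for small $n$ there are unstable corrections, but the compatibility with the stabilisation maps $r_n$ encoded in the existence of the $H_{\alpha,n}$ forces a coherent choice, exactly as in the $d=0$ case treated in 2.5 of \cite{Baues1}. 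So I would show that the $H_{\alpha,n}$ witness that the sequence $([\alpha_n])_n$ is ``stable'', hence determined by a single element of $\mathcal A^{k-d}$, and conversely that every stable Steenrod operation of degree $k-d$ is realised by some choice of maps $\alpha_n$ and homotopies $H_{\alpha,n}$ (this realisation step is where one uses that the $r_n$ are $\mathbb F$-linear homotopy equivalences, so the obstruction groups to lifting are the relevant cohomology groups and the needed coherence data exist). This gives a surjection $\pi_0\big([[\mathcal A^k]]^{S^d}\big)\twoheadrightarrow\mathcal A^{k-d}$.

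For injectivity, given two objects $(\alpha,H_\alpha)$ and $(\beta,H_\beta)$ representing the same operation, one has $[\alpha_n]=[\beta_n]$ for all $n$, so tracks $H_n\colon\alpha_n\Rightarrow\beta_n$ exist; the point is to choose them compatibly so that the pasting equation $H_{\beta,n}=(\Omega H_{n+1})r_n\,\square\,H_{\alpha,n}\,\square\,r_{n+k}H_n^{\rm op}$ holds. The discrepancy between the pasted track and $H_{\beta,n}$ is a self-track of $\beta_n$, i.e. an element of $\widetilde H^{n+k-1}(S^d\wedge Z^n;\mathbb F)$, and modifying $H_n$ by a self-track of $\alpha_n$ changes this discrepancy by the difference of two such classes; an inductive argument over $n$ (again using that the stabilisation is an equivalence, so the relevant $\lim^1$ vanishes) produces a compatible system $(H_n)$. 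This is the main obstacle: it is the homotopy-coherence bookkeeping, the bigraded analogue of the ``strictification'' arguments in \cite{Baues1}, and care is needed because the Moore-homotopy composition, though strictly associative, still requires one to track the self-tracks precisely. Finally I would check that the identification $\pi_0\big([[\mathcal A^k]]^{S^d}\big)=\mathcal A^{k-d}$ is compatible with the bimodule structure induced by the composition pairing, so that assembling over $k$ yields $\pi_0\big([[\mathcal A]]^{S^d}\big)=\Sigma^d\mathcal A$ as $\mathcal A$-bimodules, using the Remark identifying $\pi_0[[\mathcal A^k]]^{S^0}=\mathcal A^k$ as the base case $d=0$.
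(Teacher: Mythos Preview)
Your argument is essentially correct, but it takes a much longer route than the paper. The paper's proof is a single line: it invokes the Remark immediately preceding the lemma, which already identifies $\pi_0[[\mathcal A^k]]^Q$ with the graded set of homotopy classes of spectrum maps $Q\wedge H\mathbb F\to H\mathbb F$, for \emph{any} pointed space $Q$. Specialising to $Q=S^d$ gives $[S^d\wedge H\mathbb F,H\mathbb F]^k=\mathcal A^{k-d}=(\Sigma^d\mathcal A)^k$ at once, with no further work.

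What you are doing instead is, in effect, reproving that Remark in the special case $Q=S^d$ by hand: unwinding the definition of the groupoid, passing to homotopy classes levelwise, and then running a stabilisation/obstruction argument (surjectivity by realisation, injectivity by adjusting tracks using a $\varprojlim^1$-vanishing). That is a legitimate and instructive approach, and it has the merit of making explicit exactly why the data $(\alpha,H_\alpha)$ modulo the morphisms in $[[\mathcal A^k]]^{S^d}$ amounts to a stable operation. But you should be aware that the paper treats this identification as already established (via the spectrum-level description), and so your coherence bookkeeping, while correct in outline, is work the paper has outsourced. In particular, you only use the Remark for the base case $d=0$, whereas the paper uses it for all $d$ simultaneously.
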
 
\begin{proof}
The maps $S^d \wedge H \mathbb F \rightarrow H \mathbb F$ between spectra correspond to the elements in $\Sigma^d \mathcal A$.
\end{proof}

For a groupoid $[[G]]$, let $[[G]]_0$ be the set of objects and let $[[G]]_1$ be the set of morphisms in $[[G]]$. Moreover, if $[[G]]$ is pointed by $0$, then let $[[G]]_{01}$ be the set of morphisms $H: f \Rightarrow 0$, see 5.1.1 \cite{Baues1}. Let $\partial: [[G]]_{01} \rightarrow [[G]]_0$ be defined by $\partial H = f$.
\begin{lemma}\label{exseq}
There is an exact sequence of graded $\mathbb F$--vector spaces ($d \geq 0$)
\begin{equation*}
0 \longrightarrow \Sigma^{d+1} \mathcal A \longrightarrow [[\mathcal A]]_{01}^{S^d} \stackrel{\partial}{\longrightarrow} [[\mathcal A]]_0^{S^d} \longrightarrow \Sigma^d \mathcal A \longrightarrow 0
\end{equation*}
Moreover, there is a strict element $[1] \in [[\mathcal A^1]]_0^{S^1}$ mapping to $\Sigma 1 \in \Sigma \mathcal A$.
\end{lemma}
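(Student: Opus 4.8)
The plan is to recognise the four--term sequence as the exact sequence attached to a \emph{pointed $\mathbb F$--vector space object in groupoids}, and then to identify its two outer terms with homotopy groups of an Eilenberg--Mac\,Lane mapping spectrum. Only the identification of $\ker\partial$ requires genuine work; everything else is formal or already available.

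\emph{The formal skeleton.} Fix the degree $k$ and abbreviate $G = [[\mathcal A^k]]^{S^d}$, a pointed $\mathbb F$--vector space object in groupoids by the Lemma proved above. For any such $G$ the set $G_{01}$ of tracks $H\colon f\Rightarrow 0$ carries a natural $\mathbb F$--vector space structure, the boundary $\partial\colon G_{01}\to G_0$, $\partial H = f$, is $\mathbb F$--linear, its kernel is the automorphism group ${\rm Aut}_G(0) = \pi_1(G,0)$, and its cokernel is $\pi_0(G)$; this is the elementary algebra of $5.1$ in \cite{Baues1}. Hence, letting $k$ vary, one gets for each $d\ge 0$ an exact sequence of graded $\mathbb F$--vector spaces
\[
0\longrightarrow \pi_1\bigl([[\mathcal A]]^{S^d},0\bigr)\longrightarrow [[\mathcal A]]_{01}^{S^d}\stackrel{\partial}{\longrightarrow}[[\mathcal A]]_0^{S^d}\longrightarrow \pi_0\bigl([[\mathcal A]]^{S^d}\bigr)\longrightarrow 0 .
\]
The right-hand cokernel is $\Sigma^d\mathcal A$ by the Lemma immediately preceding the statement (which also supplies the surjectivity onto $\Sigma^d\mathcal A$), so it remains only to identify the kernel.

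\emph{The kernel: the main obstacle.} We must show $\pi_1\bigl([[\mathcal A^k]]^{S^d},0\bigr) = (\Sigma^{d+1}\mathcal A)^k$. The zero object is strict (both composites in its coherence square are the constant map), so an automorphism of it is a sequence of tracks $H_n\colon 0\Rightarrow 0$ of maps $S^d\wedge Z^n\to Z^{n+k}$ for which the morphism coherence condition degenerates to the single equation of tracks $(\Omega H_{n+1})\,r_n = r_{n+k}\,H_n$. A track $0\Rightarrow 0$ of a map $S^d\wedge Z^n\to Z^{n+k}$ is the same as a class in $[S^{d+1}\wedge Z^n,Z^{n+k}]$, and under this translation, using that the $r_n$ are homotopy equivalences, the displayed equation is exactly the track-level compatibility defining a point of $\pi_0[[\mathcal A^k]]^{S^{d+1}}$; therefore $\pi_1\bigl([[\mathcal A^k]]^{S^d},0\bigr) = \pi_0[[\mathcal A^k]]^{S^{d+1}} = (\Sigma^{d+1}\mathcal A)^k$ by the preceding Lemma. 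Conceptually this just says $[[\mathcal A^k]]^{S^d}$ is a Moore-strictified model for the fundamental groupoid of the spectrum mapping space ${\rm map}(S^d\wedge H\mathbb F,\Sigma^kH\mathbb F)$ (cf.\ the Remark), whose $\pi_1$ is $[S^{d+1}\wedge H\mathbb F,\Sigma^kH\mathbb F]=(\Sigma^{d+1}\mathcal A)^k$. I expect the real work to lie precisely here: carefully matching the Moore-homotopy data carried by objects over $S^{d+1}$ with the track-level coherence satisfied by automorphisms over $S^d$, and checking that all these identifications are $\mathbb F$--linear and natural in $k$. This is the $[[\mathcal A]]_{\ast}$--analogue of the analysis of the secondary Steenrod algebra in $2.5$ and $5.1$ of \cite{Baues1}.

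\emph{The strict element $[1]$.} Finally, specialise to $k=d=1$ and define $[1]\in[[\mathcal A^1]]_0^{S^1}$ by $[1]_n\colon S^1\wedge Z^n\to Z^{n+1}$ equal to $\overline r_n$ of (\ref{overlinern}) (up to the coordinate flip $S^1\wedge Z^n\cong Z^n\wedge S^1$). Since $\overline r_n$, the maps $r_n$ for $Z^n$, and the maps $r_n$ for $S^1\wedge Z^n$ are assembled from the same $\mathbb F$--linear equivalences, the square in the definition of $[[\mathcal A^1]]^{S^1}$ commutes strictly on this choice, so $[1]$ is a strict object with identity Moore homotopy. Under the identification of $\pi_0[[\mathcal A^1]]^{S^1}$ with $(\Sigma\mathcal A)^1 = \mathcal A^0$, the class of $[1]$ is $\Sigma 1$ because $\overline r_n$ realises the cohomology suspension isomorphism; this is the element denoted $[1]$ in $2.5$ of \cite{Baues1}.
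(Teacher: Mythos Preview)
Your treatment of the exact sequence is correct and in fact considerably more detailed than the paper's. The paper's proof addresses \emph{only} the construction of $[1]$ and says nothing whatsoever about the four-term sequence; the argument via $\pi_1(G,0)$ and $\pi_0(G)$ for a pointed $\mathbb F$--vector space groupoid, together with the identification of automorphisms of the zero object with $\pi_0[[\mathcal A^k]]^{S^{d+1}}$, is left entirely implicit. Your unpacking of that step is sound.

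There is one genuine discrepancy in the construction of $[1]$. You take $[1]_n$ to be $\overline r_n$ precomposed with the interchange $T$, and then assert that the coherence square commutes strictly. The paper inserts an additional map: it defines
\[
\alpha_n\colon S^1\wedge Z^n \xrightarrow{T} Z^n\wedge S^1 \xrightarrow{\overline r_n} Z^{n+1} \xrightarrow{\tau_n} Z^{n+1},
\]
where $\tau_n\in\sigma_{n+1}$ is the transposition of $\{1,\dots,n\}$ with $(n+1)$. In the specific models $Z^n$ used in \cite{Baues1} (which carry permutation actions), this $\tau_n$ is exactly what is needed to make $r_{n+1}\alpha_n = (\Omega\alpha_{n+1})r_n$ hold on the nose rather than merely up to homotopy. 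Your sentence ``the square \ldots\ commutes strictly on this choice'' is therefore an assertion that, in the paper's model, is not true without the $\tau_n$ correction; you should either include $\tau_n$ or verify strictness explicitly in whatever model you are assuming. This is a small repair, not a structural problem.
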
 
\begin{proof}
The element $[1]$ is given by the composite
\begin{equation*}
\alpha_n: S^1 \wedge Z^n \stackrel{T}{\longrightarrow} Z^n \wedge S^1 \stackrel{\overline r_n}{\longrightarrow} Z^{n+1} \stackrel{\tau_n}{\longrightarrow} Z^{n+1},
\end{equation*}
where $T$ is the interchange map and $\overline r_n$ is defined in (\ref{overlinern}). Moreover, $\tau_n \in \sigma_{n+1}$ is the permutation of $\{1, \ldots, n\}$ and $(n+1)$. Then $H_{\alpha,0} = 0^{\square}$ is the trivial homotopy. One readily checks that the map $[1]$ is well--defined.
\end{proof}

\medskip

Let $D^{d+1}$ be the $(d+1)$--ball (or $(d+1)$--disc) given by the reduced cone of the sphere $S^d$. We have the inclusion $\iota: S^d \subset D^{d+1}$ of the boundary and the quotient map $q: D^{d+1} \rightarrow D^{d+1} / S^d = S^{d+1}$. 
\begin{lemma}\label{exseq2}
The maps $\iota$ and $q$ induce an exact sequence of $\mathbb F$--vector spaces
\begin{equation*}
[[\mathcal A^k]]_0^{S^{d+1}} \stackrel{q^{\ast}}{\longrightarrow} [[\mathcal A^k]]_0^{D^{d+1}} \stackrel{\partial = \iota^{\ast}}{\longrightarrow} [[\mathcal A^k]]_0^{S^d}.
\end{equation*}
Moreover, there is a surjective $\mathbb F$--linear map
\begin{equation*}
[[\mathcal A^k]]_0^{D^{d+1}} \stackrel{\pi}{\twoheadrightarrow}  [[\mathcal A^k]]_{01}^{S^d} 
\end{equation*}
which carries a map $H$, considered as a homotopy $H: \alpha \rightarrow 0$ with $\alpha = \partial H$, to the associated track.
\end{lemma}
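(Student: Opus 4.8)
The plan is to exploit the cofibre sequence $S^d \stackrel{\iota}{\to} D^{d+1} \stackrel{q}{\to} S^{d+1}$ together with the identification of the reduced cone $D^{d+1}\wedge Z^n = C(S^d\wedge Z^n)$ and the tautology that a pointed map out of a reduced cone is the same datum as a nullhomotopy of its restriction to the base.

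\emph{Exactness at $[[\mathcal A^k]]_0^{D^{d+1}}$.} Since $q\circ\iota\colon S^d\to S^{d+1}$ is the constant map, $q\iota\wedge 1_{Z^n}$ is constant, so $\iota^{\ast}q^{\ast} = (q\iota)^{\ast} = 0$ and $\mathrm{im}(q^{\ast})\subseteq\ker(\iota^{\ast})$. Conversely, if $(\beta,H_\beta)\in[[\mathcal A^k]]_0^{D^{d+1}}$ satisfies $\iota^{\ast}(\beta,H_\beta)=0$, then each $\beta_n$ and each Moore homotopy $H_{\beta,n}$ vanishes on the subcomplex $S^d\wedge Z^n\subseteq D^{d+1}\wedge Z^n$; since $D^{d+1}\wedge Z^n\to(D^{d+1}\wedge Z^n)/(S^d\wedge Z^n)=S^{d+1}\wedge Z^n$ is an identification map, $\beta_n$ and $H_{\beta,n}$ factor uniquely as $\bar\beta_n(q\wedge 1)$ and $H_{\bar\beta,n}(q\wedge1)$, and naturality of the structure maps $r_n$ shows that $(\bar\beta,H_{\bar\beta})$ is an object of $[[\mathcal A^k]]^{S^{d+1}}$ with $q^{\ast}(\bar\beta,H_{\bar\beta})=(\beta,H_\beta)$. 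Finally $q^{\ast}$ and $\iota^{\ast}$ are $\mathbb F$--linear on objects because the vector space operations on $[[\mathcal A^k]]^{(-)}$ are induced target--wise from those of $Z^{n+k}$ and hence commute with precomposition by a map of the form $f\wedge 1$.

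\emph{Definition of $\pi$.} Under $D^{d+1}\wedge Z^n=C(S^d\wedge Z^n)$, a map $\beta_n\colon D^{d+1}\wedge Z^n\to Z^{n+k}$ is the same datum as a Moore homotopy $H\colon \alpha_n\to 0$ with $\alpha_n:=\beta_n|_{S^d\wedge Z^n}$; let $H_n\colon\alpha_n\Rightarrow 0$ be the associated track. Moreover $r_n^{D^{d+1}}$ is compatible with $r_n^{S^d}$ via the cone construction, so restricting $H_{\beta,n}$ in the cone direction converts the object datum of $(\beta,H_\beta)$ into precisely the relation
\[
H_{\alpha,n} = \bigl((\Omega H_{n+1})\,r_n\bigr)^{\mathrm{op}}\ \square\ r_{n+k}H_n ,
\]
i.e.\ into the statement that $(H_n)_n$ is a morphism $(\alpha,H_\alpha)\Rightarrow(0,0)$ in $[[\mathcal A^k]]^{S^d}$, where $(\alpha,H_\alpha)=\iota^{\ast}(\beta,H_\beta)$. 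Set $\pi(\beta,H_\beta)=(H_n)_n\in[[\mathcal A^k]]_{01}^{S^d}$; this is $\mathbb F$--linear because $C(-)$ respects the $\mathbb F$--vector space structure (cone of a linear map, sums of nullhomotopies), and by construction $\partial\circ\pi=\iota^{\ast}$, so the two parts of the lemma fit together into one exact string.

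\emph{Surjectivity of $\pi$.} Given a morphism $H=(H_n)\colon(\alpha,H_\alpha)\Rightarrow(0,0)$, choose for each $n$ a Moore homotopy $h_n$ representing the track $H_n$; this produces maps $\beta_n\colon D^{d+1}\wedge Z^n\to Z^{n+k}$ restricting to $\alpha_n$. It remains to equip $(\beta_n)$ with Moore homotopies $H_{\beta,n}$ making it an object of $[[\mathcal A^k]]^{D^{d+1}}$ with $\pi(\beta,H_\beta)=H$. The morphism equation above says exactly that the two maps $\Omega\beta_{n+1}r_n^{D^{d+1}}$ and $r_{n+k}\beta_n$ which $H_{\beta,n}$ must connect become homotopic after restriction, so the relevant track obstruction vanishes; using that $Z^n$ is a topological $\mathbb F$--vector space (a product of Eilenberg--{Mac\,Lane} spaces) one rigidifies these homotopies to Moore homotopies $H_{\beta,n}$ that are compatible with composition, exactly as in the construction of the secondary Steenrod algebra for $d=0$ in Chapter~2 of \cite{Baues1}; if necessary one first adjusts the $h_n$ within their track classes so that the coherences assemble.

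The main obstacle is this last step: passing from the track--level data of a morphism into $0$ to an honest object on the cone carrying strictly coherent Moore homotopies. The fact that $\pi$ is only surjective (not bijective) reflects the genuine non--uniqueness of this lifting, and the argument parallels, but is more intricate than, the $S^0$ case in \cite{Baues1}; some care is also needed with the conventions for the reduced cone (which endpoint carries $\alpha$), with the direction of tracks, and with the systematic use of Moore rather than ordinary homotopies in order to keep all composites strictly associative.
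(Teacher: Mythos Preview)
The paper states this lemma without proof; there is no argument to compare against. Your approach---using the cofibre sequence $S^d\hookrightarrow D^{d+1}\twoheadrightarrow S^{d+1}$ together with the tautology that a pointed map out of a reduced cone is the same as a Moore nullhomotopy of its boundary restriction---is exactly the natural one and is consistent with the way the lemma is phrased and used later (see diagram~(\ref{extdiag}) and Claim~\ref{gammahom}).

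Your treatment of exactness is clean, and your definition of $\pi$ (with the check that the resulting tracks $(H_n)$ satisfy the morphism relation for $(\alpha,H_\alpha)\Rightarrow(0,0)$) is correct. You rightly flag the surjectivity of $\pi$ as the delicate point: one must extend the given Moore homotopy $H_{\alpha,n}$ on $S^d\wedge Z^n$ to a Moore homotopy $H_{\beta,n}$ on $D^{d+1}\wedge Z^n$ between the two prescribed maps $\Omega\beta_{n+1}r_n$ and $r_{n+k}\beta_n$. This is not an immediate consequence of the homotopy extension property alone (HEP only fixes one endpoint), and a priori there could be an obstruction in $\pi_0[[\mathcal A^{k+1}]]^{S^{d+1}}$. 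Your remark that ``the relevant track obstruction vanishes'' by the morphism equation is the right idea, but to make it airtight you should spell out that the morphism identity for $(H_n)$ forces the two candidate nullhomotopies of $\Omega\alpha_{n+1}r_n$ (coming from $r_{n+k}h_n\,\square\,H_{\alpha,n}$ and from $(\Omega h_{n+1})r_n$) to agree as tracks, which is exactly what kills that obstruction. With that made explicit, the argument is complete.
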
 

Let $I = [0,1]$ be the \emph{unit interval} and, for $d \geq 0$, let $I^d = I \wedge \ldots \wedge I$ be the $d$--fold smash product of $I$ which is the $0$--sphere for $d = 0$. For $d > 0$, we fix a homeomorphism $\lambda: I^d \rightarrow D^d$, so that $\lambda$ induces an isomorphism of $\mathbb F$--vector spaces
\begin{equation}
\lambda^{\ast}: [[\mathcal A^k]]^{D^d} \cong [[\mathcal A^k]]^{I^d}.
\end{equation}
Since $I^d \wedge I^{\ell} = I^{d+\ell}$, we have the associative multiplication
\begin{equation}\label{multiplication}
[[\mathcal A^r]]^{I^d} \times [[\mathcal A^k]]^{I^{\ell}} \stackrel{\circ}{\longrightarrow}  [[\mathcal A^{k+r}]]^{I^{d+\ell}}
\end{equation}
which is $\mathbb F$--linear on the left hand side.

\section{The inductive definition of $B_{\ast}$}
The algebra $B_{\ast}$ of higher cohomology operations  with the properties in Section \ref{highcohom} is defined inductively as a strictification of the higher Steenrod algebra $[[\mathcal A]]_{\ast}$ in Section \ref{steenrod}. As in (\ref{B}), we have
\begin{eqnarray*}
&& B_{\ast} =  (\mathbb G_{\ast} \coprod T_{\mathbb G}(E), d) \\
&& B_{\ast}^{(m)} =  {\rm tr}_{m-1}(B_{\ast}), m \geq 1.
\end{eqnarray*}
We define inductively $B_{\ast}^{(m)}$ and the $m$--dimensional part $E_m$ of the bigraded set $E, m \geq 0$.

\medskip

We observe that $B_0 = T_{\mathbb G}(E_0)$ and that $B_{\ast}$ is free as a $\mathbb G$--module with basis elements given by the words
\begin{equation}\label{basisel}
\alpha_0 g_1 \alpha_1 g_2 \alpha_2 \ldots g_k \alpha_k,
\end{equation}
where $k \geq 0, \alpha_i \in {\rm Mon}(E)$, with $\alpha_1, \ldots, \alpha_{k-1}$ different from $1$ and $g_i \in \{ [1]^n[p]_t, s, t \geq 0, s+t > 0\}$.It is clear that $B_s$ depends only on the elements $[1]^s[p]_t$ and on the sets $E_0, E_1, \ldots, E_s$.

The $(m)$--algebra $B_{\ast}^{(m)}$ will be constructed as a pull back diagram of $\mathbb G$--modules, $m \geq 2$,
\begin{equation}\label{pb}
\xymatrix{
B_{m-1}^{(m)} \ar[d]_d \ar[r]^{\overline s_{m-1}} & [[\mathcal A]]_{01}^{S^{m-2}} \ar[d]^{\partial}\\
{\rm ker}(d_{m-2}) \ar[r]^{s_{m-2}} & [[\mathcal A]]_{0}^{S^{m-2}}}
\end{equation}
Here $B_{\ast}^{(m)}$ as a chain complex has the form
\begin{equation*}
\xymatrix{
0 \ar[r] & B_{m-1}^{(m)} \ar[r]^d & B_{m-2} \ar[r]^{d_{m-2}} & B_{m-3} \ar[r] & \ldots \ar[r] & B_1 \ar[r]^{d_1} & B_0 \ar[r] &0}
\end{equation*}
Since $s_{m-1}$ surjects onto ${\rm cok}(\partial)= \Sigma^{m-2} \mathcal A$, we see that the pull back diagram is also a push out diagram. Therefore we obtain, by (\ref{exseq}), the exact sequence
\begin{equation*}
\xymatrix{
0 \ar[r] & \Sigma^{m-1} \mathcal A \ar[r]^{\iota} & B_{m-1}^{(m)} \ar[r]^d & {\rm ker}(d_{m-2}) \ar[r]^q & \Sigma^{m-2} \mathcal A \ar[r]  & 0}
\end{equation*}
An element in $B_{m-1}^{(m)}$ is a pair $(H, x)$ with $x \in {\rm ker}(d_{m-2})$ and $H: s_{m-2} \Rightarrow 0$ in the groupoid $[[\mathcal A]]^{S^{m-2}}$, so that $d(H, x) = x$. We have canonical elements
\begin{equation}
e^t_{m-1} \in B_{m-1}^{(m)}, t \geq 0,
\end{equation}
with $e^0_{m-1} = \iota(\Sigma^{m-1} 1)$ and, for $t > 0$,
\begin{equation*}
\begin{cases}
d(e^t_{m-1}) = p (-1)^{m-1} e^{t-1}_{m-1}, \\
\overline s_{m-1}(e^t_{m-1}) = 0.
\end{cases}
\end{equation*}

We call a graded set $E_{m-1}$ a \emph{generating set} for $B_{m-1}^{(m)}$ if there is a function
\begin{equation}\label{genset}
e'_{m-1}: E_{m-1} \rightarrow B_{m-1}^{(m)}
\end{equation}
with the following properties. Let
\begin{equation*}
e = e_{m-1}: \mathbb G_{\ast} \coprod T(E_0, \ldots, E_{m-1}) \rightarrow B_{\ast}^{(m)}
\end{equation*}
be the algebra map which is the identity in dimension $< m-1$ and, in dimension $m-1$, is given by
\begin{eqnarray*}
&& e\big( [1]^{m-1} [p]_t \big) = e^t_{m-1}, \quad t \geq 0, \\
&& e(x) = e'_{m-1}(x) \quad {\rm for} \ x \in E_{m-1}.
\end{eqnarray*}
We say that $e$ is \emph{generating} if $e$ is surjective in dimension $m-1$. Now assume $E_0, \ldots, E_{m-2}$ are given and the $m$--algebra $B_{\ast}^{(m)}$ is constructed with the properties in (\ref{pb}). Then we choose a generating set $E_{m-1}$ of $B_{m-1}^{(m)}$ as in (\ref{genset}) and a lift $s_{m-1}^E$ as in the commutative diagram
\begin{equation}
\xymatrix{
E_{m-1} \ar@{-->}[dd]_{s_{m-1}^E} \ar[rr]^{e'_{m-1}} && B_{m-1}^{(m)}  \ar[d]^{\overline s_{m-1}} \\
&& [[\mathcal A]]_{01}^{S^{m-2}}\\
[[\mathcal A_0]]^{I^{m-1}} && [[\mathcal A]]_0^{D^{m-1}}  \ar[ll]^{\cong}_{\lambda^{\ast}} \ar@{->>}[u]_{\pi}}
\end{equation}
Accordingly, there are functions $s_k^E$ for $k \leq m-1$ by induction. By the multiplication (\ref{multiplication}), the functions $s_0^E \ldots s_{m-1}^E$ define a unique monoid map
\begin{equation*}
s_{\ast}^{\rm Mon}: {\rm Mon}(E_0, \ldots, E_{m-1}) \rightarrow \big( [[\mathcal A]]_0^{I^k}\big)_{k \geq 0}.
\end{equation*}

We now define
\begin{equation}
B_{m-1} = \big(\mathbb G_{\ast} \coprod T(E_0, \ldots, E_{m-1})\big)_{m-1}
\end{equation}
and we define $d_{m-1}$ by the composite
\begin{equation*}
d_{m-1}: B_{m-1} \stackrel{e}{\longrightarrow} B_{m-1}^{(m)} \stackrel{d}{\longrightarrow} B_{m-2},
\end{equation*}
where we use $e$ in (\ref{genset}). Moreover, we define
\begin{equation}
s_{m-1}^B: B_{m-1} \longrightarrow [[\mathcal A]]_0^{I^{m-1}}
\end{equation}
by the $\mathbb G$--linear map which, on basis elements (\ref{basisel}), is given by products of
\begin{equation*}
\begin{array}{lllll}
& s_{m-1}^B(\alpha_i) & = & s_{\ast}^{\rm Mon}(\alpha_i) & {\rm for} \ \alpha_i \in {\rm Mon}(E_0, \ldots, E_{m-1}),\\
& s_{m-1}^B([p]_t) & = & 0 & {\rm for} \ t > 0, \\
& s_{m-1}^B([1]^s) & = & \lambda^{\ast} q^{\ast}([1]^s), &
\end{array}
\end{equation*}
where $[1]^s \in [[\mathcal A]]_0^{S^s}$ is given by the $s$--fold product of the strict element $[1]$ in (\ref{exseq}). Here we use $q^{\ast}$ in Lemma \ref{exseq2} and $\lambda^{\ast}$ in (\ref{multiplication}). However, the map $s_{\ast}^B$ is not multiplicative, that is, for $x, y \in B_{\ast}$ with $x \cdot y \in B_{m-1}$, the element $s_{\ast}^B(x \cdot y)$ does not coincide with the product $s_{\ast}^B(x) \cdot s_{\ast}^B(y)$. We obtain the following diagram extending (\ref{pb}).
\begin{equation}\label{extdiag}
\xymatrix{
{\rm ker} (d_{m-1}) \ar@{>->}[d] \ar[rrr]^{s_{m-1}} &&& [[\mathcal A]]_0^{S^{m-1}} \ar@{>->}[d]\\
B_{m-1} \ar@/^1.5pc/[rrr]^{(\lambda^{\ast})^{-1}s^B_{m-1}} \ar[r] \ar[d]_{d_{m-1}} & B_{m-1}^{(m)} \ar[r] \ar[dl]^d & [[\mathcal A]]_0^{S^{m-2}} \ar[dr]_{\partial} & [[\mathcal A]]_0^{D^{m-1}} \ar[l] \ar[d]^{\iota_{\ast}} \\
{\rm ker}(d_{m-2}) \ar[rrr]_{s_{m-2}} &&& [[\mathcal A]]_0^{S^{m-2}}}
\end{equation}
The columns on the left hand side and on the right hand side are exact. The diagram, however, does not commute, so that the map $s_m$, as a restriction of $(\lambda^{\ast})^{-1}s^B_{m-1}$ is not defined directly.

\begin{claim}\label{gammahom}
For $x \in B_{m-1}$ there is a \emph{$\Gamma$--homotopy}
\begin{equation*}
\Gamma_x: s_{m-2}d_{m-1}(x) \Rightarrow \iota^{\ast}(\lambda^{\ast})^{-1} s_{m-1}^B(x)
\end{equation*}
which, for $x \in {\rm ker}(d_{m-1})$, yields a homotopy
\begin{equation*}
\Gamma_x: 0 \Rightarrow \iota^{\ast}(\lambda^{\ast})^{-1} s_{m-1}^B(x),
\end{equation*}
so that $s_{m-1}(x) = (\lambda^{\ast})^{-1} s_{m-1}^B(x) \square \Gamma_X$ is defined. Here $s_{m-1}$ is a homomorphism of $\mathbb G$--modules.
\end{claim}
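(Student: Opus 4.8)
The plan is to write $\Gamma_x$ down directly as a pasting of two tracks over $S^{m-2}$ that are already available from the data introduced before the claim, and then to read off the remaining assertions. Since $e(x) \in B_{m-1}^{(m)}$ and $B_{m-1}^{(m)}$ is the pull back (\ref{pb}), the element $e(x)$ is the pair $(d(e(x)),\overline s_{m-1}(e(x)))$ with $d(e(x)) = d_{m-1}(x) \in \ker(d_{m-2})$ and with $\overline s_{m-1}(e(x)) \in [[\mathcal A]]_{01}^{S^{m-2}}$ a track whose boundary is $\partial\,\overline s_{m-1}(e(x)) = s_{m-2}d(e(x)) = s_{m-2}d_{m-1}(x)$; that is,
\begin{equation*}
\overline s_{m-1}(e(x)): s_{m-2}d_{m-1}(x) \Longrightarrow 0 .
\end{equation*}
On the other hand $\xi_x := (\lambda^\ast)^{-1}s_{m-1}^B(x) \in [[\mathcal A]]_0^{D^{m-1}}$, so Lemma \ref{exseq2} supplies the track $\pi(\xi_x): \iota^\ast\xi_x \Rightarrow 0$, where $\iota^\ast\xi_x = \iota^\ast(\lambda^\ast)^{-1}s_{m-1}^B(x)$. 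I would then define
\begin{equation*}
\Gamma_x := \pi(\xi_x)^{\rm op} \square \overline s_{m-1}(e(x)) ,
\end{equation*}
which, by the composition law for tracks, is a track $s_{m-2}d_{m-1}(x) \Rightarrow \iota^\ast(\lambda^\ast)^{-1}s_{m-1}^B(x)$, as required. (For a generator $x \in E_{m-1}$ one has $\overline s_{m-1}(e(x)) = \pi(\xi_x)$ by the commutative square defining $s_{m-1}^E$, so $\Gamma_x$ is then the identity track.)

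Next I would check that $x \mapsto \Gamma_x$ is $\mathbb G$--linear. The maps $e$, $\overline s_{m-1}$ and $s_{m-1}^B$ are $\mathbb G$--linear -- the first is an algebra map, the second the projection of a pull back of $\mathbb G$--modules, and the third is $\mathbb G$--linear by its definition on the basis (\ref{basisel}) -- whereas $\lambda^\ast$ and $\pi$ are $\mathbb F$--linear and $[[\mathcal A^k]]^{S^{m-2}}$ is an $\mathbb F$--vector space object in groupoids, so that both $\square$ and $(-)^{\rm op}$ are additive on composable morphisms. Hence $\Gamma_{x+x'} = \Gamma_x + \Gamma_{x'}$ and $\Gamma_{gx} = g\,\Gamma_x$ for $g \in \mathbb G$, where the sums and scalar multiples of morphisms are formed with the vector space structure of $[[\mathcal A^k]]^{S^{m-2}}$; this is consistent because $d_{m-1}$, $s_{m-2}$ and $\iota^\ast(\lambda^\ast)^{-1}s_{m-1}^B$ are linear, so that the domains and codomains add up correctly as well.

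For $x \in \ker(d_{m-1})$ one has $d(e(x)) = d_{m-1}(x) = 0$, hence $\overline s_{m-1}(e(x))$ is an automorphism of $0$ and the domain of $\Gamma_x$ becomes $s_{m-2}(0) = 0$, so that $\Gamma_x: 0 \Rightarrow \iota^\ast(\lambda^\ast)^{-1}s_{m-1}^B(x)$. To obtain $s_{m-1}(x) \in [[\mathcal A]]_0^{S^{m-1}}$ I would then paste the object $\xi_x$ -- regarded, through the reduced cone structure on $D^{m-1}$, as a null track of its boundary restriction $\iota^\ast\xi_x$ -- with the track $\Gamma_x$ along $S^{m-2}$: concretely one glues the cylinder object realising $\Gamma_x$ onto the cone object $\xi_x$ along $\partial D^{m-1} = S^{m-2}$, obtaining an object over a disc $\cong D^{m-1}$ whose boundary restriction is the trivial object. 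By the exactness of $[[\mathcal A^k]]_0^{S^{m-1}} \xrightarrow{q^\ast} [[\mathcal A^k]]_0^{D^{m-1}} \xrightarrow{\iota^\ast} [[\mathcal A^k]]_0^{S^{m-2}}$ in Lemma \ref{exseq2}, together with the injectivity of $q^\ast$ recorded in (\ref{extdiag}), this object equals $q^\ast$ of a unique element $s_{m-1}(x) \in [[\mathcal A]]_0^{S^{m-1}}$, and the formula $s_{m-1}(x) = (\lambda^\ast)^{-1}s_{m-1}^B(x) \square \Gamma_x$ of the statement records exactly this pasting. The $\mathbb G$--linearity of $s_{m-1}$ follows from that of $\Gamma$, $s_{m-1}^B$, $\lambda^\ast$, $q^\ast$ and the gluing, as in the previous paragraph.

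I expect the main obstacle to be not the formula for $\Gamma_x$, which is essentially forced, but the bookkeeping required to make the pasting of the previous paragraph precise -- as an operation relating objects over $D^{m-1}$, objects over $S^{m-1}$, objects over $S^{m-2}$, and tracks over $S^{m-2}$ in the Moore homotopy model -- and to verify that the resulting $s_{m-1}(x)$ is genuinely well defined, independent of the coning identification, and $\mathbb G$--linear. Once this calculus of pastings is set up as in Chapter~5 of \cite{Baues1}, with the Moore homotopies guaranteeing strict associativity of $\square$, these verifications are routine; the failure of the near--algebra pairing $\circ$ of (\ref{multiplication}) to be bilinear does not intervene here, since only $\square$ and $(-)^{\rm op}$, which are honestly additive, enter the construction of $\Gamma$.
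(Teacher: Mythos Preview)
The paper does not prove this claim. In this paper the \texttt{claim} environment is used consistently for assertions that are left unproven --- compare Claims~1.6--1.8 and 2.3 --- and Claim~\ref{gammahom} is no exception: immediately after the statement the text continues ``Using Claim~\ref{gammahom}, we define $s_{m-1}$ \ldots'' with no intervening argument. So there is no proof in the paper to compare your attempt against.

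That said, your construction is in the spirit of what the surrounding text suggests. Your formula $\Gamma_x = \pi(\xi_x)^{\rm op}\,\square\,\overline s_{m-1}(e(x))$ is natural, and your observation that it reduces to the identity on generators $x\in E_{m-1}$ is correct by the commutative square defining $s_{m-1}^E$. One point you should be more careful about: the paper repeatedly emphasises that $s_{m-1}^B$ is \emph{not} multiplicative and that this is what forces the appearance of $\Gamma$--tracks (cf.\ the $\Gamma(x,y)$ introduced just after the claim, and the analogous $\Gamma$--tracks in 5.2.3 of \cite{Baues1}). In your argument the non-multiplicativity never surfaces, because you route everything through the algebra map $e$ and the pull-back projection $\overline s_{m-1}$. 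For this to work you need that the multiplication on $B_{m-1}^{(m)}$ --- which is itself defined via $\Gamma$--tracks at the previous stage --- interacts compatibly with $\overline s_{m-1}$ and with the monomial definition of $s_{m-1}^B$; that compatibility is exactly the inductive content hidden in the claim, and you have not addressed it. Your final paragraph correctly identifies the pasting calculus as the main bookkeeping burden, but the substantive gap is this multiplicative compatibility, not the Moore-homotopy formalism.
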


Using Claim \ref{gammahom}, we define $s_{m-1}$ in (\ref{extdiag}) and hence we define $B_m^{(m+1)}$ by the pull back (\ref{pb}). This defines $B_{\ast}^{(m+1)}$ as a $\mathbb G$--module. In order to define $B_{\ast}^{(m+1)}$ as an $(m+1)$--algebra, we need the multiplication maps
\begin{equation}
\begin{array}{lllr}
B_m^{(m+1)} \otimes B_0 & \longrightarrow & B_m^{(m+1)} &\\
B_0 \otimes B_m^{(m+1)} & \longrightarrow & B_m^{(m+1)}  & \\
B_i \otimes B_j & \longrightarrow & B_m^{(m+1)}  & {\rm for} \ i+ j = m, i < m, j < m.
\end{array}
\end{equation}
An element in $B_m^{(m+1)}$ is a pair $(H, x)$ with $x \in B_{m+1}$ and $H: s_{m-1}(x) \Rightarrow 0$ in the groupoid $[[\mathcal A]]^{S^{m-1}}$. For $y \in B_0$ the products $(H, x) \cdot y$ and $y \cdot (H, x)$ are given by the $\Gamma$--tracks
\begin{equation*}
\begin{array}{llll}
\Gamma(x, y): & s_{m-1}(x) \circ s_0(y) & \Rightarrow & s_m(x \cdot y) \\
\Gamma(y, x): & s_0(y) \circ s_{m-1}(x) & \Rightarrow & s_m(y \cdot x),
\end{array}
\end{equation*}
that is, we set
\begin{equation*}
\begin{array}{lll}
(H, x) \cdot y & = & \big( H \circ s_0(y) \square \Gamma(x, y)^{\rm op}, x \cdot y \big) \\
y \cdot (H, x) & = & \big( s_0(y) \circ H \square \Gamma(y, x)^{\rm op}, y \cdot x \big).
\end{array}
\end{equation*}
Here the product of $H$ and $s_0(y)$ is given by (\ref{comp}).

Moreover, for $x \in B_i, y \in B_j$ with $i + j = m, i < m, j < m$ we obtain the product $x \cdot y = \big( H(x, y) , d(x \cdot y) \big)$ where
\begin{equation*}
d(x \cdot y) = (dx) \cdot y + (-1)^i x \cdot (dy).
\end{equation*}
Here the track $H(x, y): s_{m-1}\big(d(xy)\big) \Rightarrow 0$ is the composite
\begin{equation*}
H(x, y) = \pi(\lambda^{\ast})^{-1} \big( s_i^B(x) \circ s_j^B(y) \big) \square \Gamma(x, y)^{\rm op},
\end{equation*}
where the \emph{$\Gamma$--track} is of the form
\begin{equation*}
\Gamma(x, y): \iota^{\ast}(\lambda^{\ast})^{-1} \big( s_i^B(x) \circ s_j^B(y) \big) \Rightarrow s_{m-1}\big( d(x \cdot y) \big).
\end{equation*}


\begin{thebibliography}{99}

\bibitem{Baues1}
H.J. Baues: \emph{The algebra of secondary cohomology operations}, Progress in Mathematics, 247. Birkhauser Verlag, Basel (2006).

\bibitem{Baues2}
H.J. Baues:  \emph{Combinatorial Homotopy and $4$--Dimensional Complexes}, de Gruyter Expositions in Mathematics, 2. Walter de Gruyer and Co., Berlin (1991).

\bibitem{BauesPirashvili}
H.J. Baues and T. Pirashvili: \emph{Comparison of Mac\ Lane, Shukla and Hochschild cohomologies}, J. Reine Angew. Math. {\bf 598} (2006), 25 -- 69.

\bibitem{BauesMuro}
H.J. Baues and F. Muro: \emph{The algebra of secondary homotopy operations in ring spectra}, preprint MPIM: 137 (2006).

\bibitem{BauesJibladze1}
H.J. Baues and M. Jibladze: \emph{Secondary derived functors and the Adams spectral sequence}, Topology {\bf 45} (2006), 295--324.

\bibitem{BauesJibladze2}
H.J. Baues and M. Jibladze: \emph{Dualization ???}, ???.

\bibitem{HardieKampsKieboom}
K. Hardie, K. Kamps and R. Kieboom: \emph{A homotopy $2$--groupoid of a Hausdorff space}, Appl. Categ. Structures {\bf 8} (2000), 209--234.


\end{thebibliography}
 \end{document}